\pgfplotsset{compat=newest}
\newtheorem{assumption}{Assumption}
\def\ps@pprintTitle{%
 \let\@oddhead\@empty
 \let\@evenhead\@empty
 \def\@oddfoot{}%
 \let\@evenfoot\@oddfoot}
\makeatletter\@addtoreset{equation}{section} \makeatother
\newtheorem{theorem}{Theorem}[section]
\newtheorem{lemma}{Lemma}[section]
\newcommand{\R}{\mathbb{R}}
\begin{document}
\title{Quasi-interpolation with random sampling centers}

\author{Wenwu Gao}

\address{School of Big Data and Statistics,  Anhui University, Hefei, P. R. China}
\email{wenwugao528@163.com}
\thanks{This work is supported by  Youth Project (Class A) of Anhui Provincial Natural Science Foundation (No. 2508085J009), NSFC (12271002).}
\thanks{The third author is the corresponding author.}

\author{Le Hu}
\address{School of Big Data and Statistics,  Anhui University, Hefei, P. R. China}
\email{hule1907356451@163.com}

 \author{Xingping Sun}
\address{Department of Mathematics, Missouri State University, Springfield, MO 65897, USA}
\email{XSun@MissouriState.edu}

\author{Xuan Zhou}
 \address{Amazon Web Services, Shanghai, P. R. China}
 \email{zhouxss@amazon.com}

\subjclass[2020]{Primary 41A05, 41A63, 65D05, 65D10, 65D15}


\keywords{ Function approximation; Stochastic quasi-interpolation;  McDiarmid-type concentration inequality; Random sampling; Probabilistic convergence}

\begin{abstract}
We propose and study a  general  quasi-interpolation framework for stochastic function approximation, which stems and draws motivation from convolution-type solutions for certain practical weighted variational problems.
 We obtain our quasi-interpolants using Monte Carlo discretization of the pertinent integrals and establish a family of $L^p$-McDiarmid-type  concentration inequalities for $1\leq p\leq \infty$, which resulted in verifiable expected error estimates for the stochastic quasi-interpolants.   
The $L^1$-version of these concentration inequalities 
  is dynamically-independent of dimensions, which offers a partial stochastic  mitigation of the so called ``curse of dimensionality". The $L^\infty$-version of these concentration inequalities strengthens the existing expected $L^\infty$-error estimates in the literature. Numerical simulation results are provided at the end of the paper to validate the underlying theoretical analysis.
 \end{abstract}
\maketitle



\section{\textsl{Introduction}}
 Quasi-interpolation plays a vital role in function approximation \cite{buhmann2022quasi}, a fundamental problem  in all branches  of data science \cite{fasshauer2007meshfree-book},
  \cite{griebel2005sparse}, 
 \cite{lin_SINUM2021_distributed}, 
\cite{montufar2022distributed}. Using a quasi-interpolation method,
one can easily generate a sequence of approximants to an unknown target function directly from available data. This is in sharp contrast to many interpolation methods which often involve solving large-scale linear systems of equations \cite{fasshauer2007meshfree-book}. The versatility of quasi-interpolation manifests in many areas and appears in various formats. In particular, kernel based quasi-interpolation schemes enjoy a certain type of optimality and regularization properties  \cite{gao2020optimality}, which allows one to tailor-make kernels so that  the resulted  quasi-interpolants achieve some desirable theoretical and practical goals such as pre-designed locality and structure preservation   \cite{buhmann2022quasi}, 
\cite{gao-fasshauer-fisher2022divergence}, 
\cite{gao2018multiquadric},    
 \cite{wu1994shape}. These appealing features have driven the study and research of 
quasi-interpolation to a new height; see e.g. \cite{beatson1996multiquadric},  \cite{buhmann1990multivariate}, 
 \cite{buhmann2024compression}, 
\cite{buhmann1995quasi}, 
\cite{buhmann2024new}, \cite{buhmann2015pointwise}, \cite{fasshauer2007iterated},  \cite{franz-wendland2023multilevel}, \cite{grohs2013quasi}, \cite{Grohs2017scattered},
 \cite{hubbert2023convergence},  \cite{mazya-schimidt2001quasi},
\cite{ortmann2024high}.

In modeling many practical problems, choices of sampling centers are rarely at one's disposal. As such, many researchers have been studying  quasi-interpolation problems based on irregular sampling centers (see \cite{buhmann-2022JFAA-strict} and the references therein).  In the age of AI,   quasi-interpolation  modeling
with random sampling centers over compact domains or manifolds has been showing computational efficiency; see e.g. \cite{dai2024random}, \cite{gao2020multivariate},  \cite{gao2024quasi},  
\cite{krieg2025function},\cite{montufar2022distributed},  \cite{sommariva2025random}, \cite{sun2021probabilistic}, \cite{sun2022stochastic}, \cite{wu2013sampling}, \cite{wu2021probabilistic}, \cite{wu2020polynomial}.  

In this paper, we study and implement a new quasi-interpolation scheme which stems from the convolution-type solution of the following kernel based variational problem:
\begin{equation}\label{variationalprobleminte}
   \arg\min_{f_0\in C(\Omega)} \int_{\Omega}\int_{\Omega}\left[f(t)-f_0(x)\right]^2\psi_h(x-t)d\mu(t)dx.
\end{equation}
Here  $\Omega \subset \R^d$ is   a compact convex domain with Lipschitz boundary; $\mu$ is a probability measure on $\Omega;$ $f\in C(\Omega) $ is a target function;  $\psi_h:=h^{-d}\psi(h^{-1}(x-t)),\; h>0,$  where $\psi \in C(\R^d)$ is a nonnegative approximate identity in the sense that $\int_{\R^d} \psi(x)dx=1.$
Employing a standard technique in calculus of variation, we find the unique minimizer $f_{\rm min}$ of the functional in \eqref{variationalprobleminte} to be:
\begin{equation}\label{f-min}
 f_{\rm min}(x)  =\frac{\int_{\Omega}f(y)\psi_h(x-y)d\mu(y) }{\int_{\Omega}\psi_h(x-t)d\mu(t)}.
\end{equation} 
Here we assume that there is a constant $c>0$, such that
\[
\int_{\Omega}\psi_h(x-t)d\mu(t)>c, \quad x \in \Omega.
\]
Literature has been flourishing with innovative ways of discretizing the convolution integrals in $f_{\rm min}$ to form desirable quasi-interpolants. If $\Omega = \R^d$, and $\mu$ is the Lebesgue measure on $\R^d$, then the denominator on the right hand side of \eqref{f-min} identically equals to $1.$ 
For some appropriately chosen $\psi$,
discretizing the numerator based on the $h$-spaced grid $h\mathbb{Z}^d$ leads to the so called ``cardinal interpolation" advocated by Schoenberg  \cite{maz1996approximate}, \cite{schoenberg1946contributions}. Other methods followed in earnest. To name a few, we mention  sparse-grid  quasi-interpolation \cite{dung2016sampling}, \cite{dung2020dimension}, \cite{gao2024quasi}, \cite{jeong2021approximation},  \cite{usta-levesley2018multilevel},    Monte Carlo quasi-interpolation \cite{gao2020multivariate},   Quasi Monte Carlo quasi-interpolation \cite{gao2020optimality}, moving   least squares \cite{bos1989moving}, scaled Shepard's method \cite{senger2018optimal}, \cite{shepard1968two},  as well as many other classical quasi-interpolation schemes \cite{buhmann2022quasi}.  

In this paper,
we use Monte Carlo discretization  to generate our quasi-interpolants. To elaborate, let $X_1,\ldots,X_N$ be $N$ independent copies of the random vector $X$ with law $\mu,$ which satisfies Assumption \eqref{assumption:A} in Section 2.
We discretize the convolution integrals in both the numerator and the denominator of  \eqref{f-min}
independently and simultaneously, which gives rise to an quasi-interpolant $Q_h^rf$ in the following rational formation:
\begin{equation}\label{eq:Q_h^rf}
Q_h^rf(x):=\sum^N_{j=1}f(X_j)\frac{\psi_h(x- X_j)}{\sum_{\ell=1}^N\psi_h(x-X_{\ell})}, \ x\in \Omega.
\end{equation} 

Error analysis for our quasi-interpolant $Q_h^rf$ is done in two phases. Phase I. For a given target function $f \in C(\Omega)$, we derive in
Theorem \ref{th:convolution_error} a Jackson-type error estimate for
$\|f-f_{\rm min}\|_p$ in terms of the modulus of continuity $\omega_f$
of $f$; see \eqref{modulus-of-continuity} for the definition of $\omega_f$. The error analysis in Phase I is deterministic.
Phase II. We estimate quantitatively the $L^p$-probabilistic convergence of our quasi-interpolants to its target function and the expected upper bound for 
$\|Q_h^rf-f_{\rm min}\|_p$. 
Main results in the latter include a family of $L^p$-McDiarmid-type  concentration inequalities \cite{combes2024extension} (Theorems \ref{convergenceanalysi_infty} and \ref{convergenceanalysi_1}), which yield   the mean  $ L^p $-convergence orders of our quasi-interpolants (Theorem \ref{mean Lp eatimate}). 
As highlights of the main results of the current paper, we mention the following two inequalities:
$$
    \mathbb{P}\{\|Q_h^rf-f\|_p>\epsilon\}
    \lesssim 
     \exp\left\{-\frac{CNh^d\epsilon^2}{2(1+\epsilon)}\right\},$$ 
     for parameters $\epsilon, h,$ and $N$ in the range specified by:
 \begin{equation*}
   \omega_f(h)\lesssim \epsilon \  \text{and} \  N^{-\frac{1}{2}}h^{-\frac{d}{2}}|\mathrm{log} \ h|^{\frac{1}{2}}\lesssim \epsilon.
 \end{equation*}
    Here $C$ is a specifiable absolute constant. 
    The notation $A(N,h,\epsilon) \lesssim B(N,h,\epsilon)$  means  that there exists a positive constant $C$ independent of the parameters $N,h,\epsilon$,  
    such that  $A(N,h,\epsilon) \leq C\ B(N,h,\epsilon)$. 
For $1\le p\le 2$,  we obtain the following stronger inequality:
\begin{equation}\label{1p2}
 \mathbb{P}\{\|Q_h^rf-f\|_{p}>\epsilon\}
    \lesssim \exp\left\{-C Nh ^{2d\left(1-\frac{1}{p}\right)}\epsilon^2\right\}, 
\end{equation} 
     for parameters $\epsilon, h,$ and $N$ in the range specified by:
 \begin{equation}\label{tiao-jian}
   \omega_f(h)\lesssim \epsilon  \ \text{and} \ N^{-\frac{1}{2}}h^{-\frac{d}{2}} \lesssim \epsilon.
 \end{equation}
 If $p=1, $ then the right hand of \eqref{1p2} reduces to the clean sub-Gaussian decay form $\exp\left\{-C N \epsilon^2\right\}$,
 which means that the $L^1$-version of \eqref{1p2} 
 is independent of the dimension $d$ under the conditions of \eqref{tiao-jian}. 
 Therefore, it offers a partial-stochastic way of mitigating the curse of dimensionality. 
Based on these inequalities, we derive   the following mean $L^p$-convergence orders for an order $s\ (0<s\le 1)$ H\"{o}lder class function{\footnote{An order $s\ (0<s\le 1)$ H\"{o}lder class function $f$ can be equivalently characterized by the asymptotic equation: $\omega_f(h)=\mathcal{O}{(h^s)}$.}}:
\begin{equation*}
     \mathbb{E}\left\Vert Q_h^rf-f\right\Vert_{p}
    \lesssim \left\{\begin{array}{ll}
   N^{-\frac{s}{2s+d}} , & \hbox{if} \quad 1 \le p \le 2,\\
  N^{-\frac{s}{2s+d}}(\log N)^{\frac{1}{2}}  & \hbox{if} \quad 2<p \le \infty.
       \end{array} \right. 
\end{equation*}

The paper is organized as follows. Section $2$ lays out background information and
reviews basic definitions and assumptions which will be used in subsequent texts.  Section $3$ contains the statements of the main results and their proofs.
Section $4$ provides several numerical simulation results that validate our theoretical analysis.

\section{Mathematical preliminaries}
Denote by  $L^p(\Omega)$ the Banach spaces consisting of all Lebesgue measurable functions $f$  on $\Omega$  for which $\Vert f \Vert_{p}<\infty$.
Here as usual,  the $L^p$-norm  of $f$
is defined  by
\begin{equation*}
    \Vert f \Vert_{p} := \left\{
        \begin{array}{ll}
            \left( \int_{\Omega} |f(x)|^p \, dx \right)^{1/p} & \text{if } 1 \leq p < \infty, \\
            \inf \{ C : |f(x)| \leq C \text{ for almost all } x \} & \text{if } p = \infty.
        \end{array}
    \right.
\end{equation*}
For the sake of distinction and clarity, we denote
$\Vert f \Vert_{p,\mathbb{R}^d}$  the $L^p$-norm of $f \in L^p(\mathbb{R}^d)$.
  We define $\omega_f$,
  the modulus of continuity  of $f$ by:
  \begin{equation}
  \label{modulus-of-continuity}
  \omega_f(t)=\sup_{|x-y|\le t}|f(x)-f(y)|, \ x,y \in \Omega.
  \end{equation}
One can verify that $\omega_f(t)$ satisfies the following inequality often referred to in the literature as “sub-additivity”:
$$
\omega_f(\delta t)\le (1+\delta)\omega_f(t),\ \delta\geq 0,\ t\geq 0.
$$
  We adopt the following format for the Fourier transform pair:
  \begin{equation*}
\label{Fourier transform}
 \hat{f}(\xi)=\int_{\mathbb{R}^d}f(x)e^{-i\xi x}dx,\ \  f(x)=(2\pi)^{-d}\int_{\mathbb{R}^d}\hat{f}(\xi)e^{i\xi x}dx, \quad f \in L_1(\mathbb{R}^d),
 \end{equation*}
with the understanding that the Fourier transform and its inverse have been extended in a conventional manner so that they become applicable to Schwarz class distributions. 
   
 We take independent and identically distributed (i.i.d)  sampling centers   $\{X_j\}_{j=1}^N$ following a probability measure $\mu$ on $\Omega$ satisfying certain regularity conditions.

\begin{assumption}
\label{assumption:A} This assumption consists of two parts:
 \begin{enumerate}
     \item  There exists a positive constant $C_1$
depending only on the measure $\mu$ and dimension $d$, such that for all $x\in \Omega$ and $ \alpha \ge 0$, we have 
\begin{equation*}\label{meaure condition}
 \mu\left(\Omega \cap (B(x, \alpha)\right) \ge C_1 m\left(B(x, \alpha)\right). 
\end{equation*}
Here  
$B(x, \alpha)$ denotes  the ball centered at $x$ with radius $\alpha,$ and  $m(A)$ is the Lebesgue measure of a measurable subset $A \subset \R^d.$
\item The measure $\mu$ is extendable to be a finite positive measure $\mu^*$
on $\R^d$ such that $\widehat{\mu^*} \in L_1(\R^d).$ 
\end{enumerate}
\end{assumption}
A measure satisfying the condition of Part (1) is referred to as a “doubling" measure; see \cite[p. 8]{stein1993harmonic}. Let $\mu$ be an absolutely continuous measure on $\Omega$ (with respect to the Lebesgue measure). By Lebesgue-Radon-Nikodym theorem, there is a $g \in L_1(\Omega), \; g(x) \ge 0, \forall x \in \Omega$, such that 
\[
\mu(A) = \int_A g(x) dm(x),
\]
where $A$ is an arbitrary $\mu$-measurable subset of $\Omega$. The function $g$ is uniquely determined up to a set of Lebesgue measure zero and called the Radon-Nikodym derivative of $\mu$ with respect to $m$. In probability theory and its related areas, the function $g$ is often called the density function. If $g$ is sufficiently smooth, then
we can extend it judiciously so that the extension $g^*$ throughout 
satisfies the requirement $\widehat{g^*}\in L_1(\R^d). $
For example, if $\mu$ is the truncated Gaussian distribution, then we just extend the density function naturally so that $\mu^*$ is a Gaussian measure on $\R^d$. In addition, based on reference \cite[Prop. 2.5 (xii)]{ken1999levy},  Part (2) implies that  $\mu^*$ is absolutely continuous with respect to the Lebesgue measure and  has a bounded continuous density $h(x)$ defined via
$$
h(x)=(2\pi)^{-d}\int_{\mathbb{R}^d} e^{i\xi x}\widehat{\mu^*}(\xi) d\xi.
$$ 

We employ kernels stemming from functions defined on $\R^d$ via translation. More precisely, a function $\psi$ on $\R^d$: $x \mapsto \psi(x)$, $\psi \in C(\mathbb{R}^d
)$, induces the kernel on $\R^d \times \R^d$: $(x,y) \mapsto \psi(x-y),\; (x,y) \in \R^d \times \R^d.$
Moreover, we assume that   $\psi$ is a nonnegative approximate identity in the sense that $\int_{\mathbb{R}^d} \psi\left(x\right) dx =1$, and satisfies the conditions:  $ \int_{\mathbb{R}^d} |x|\psi\left(x\right) dx < \infty$ and $ \psi(x) \geq C_{\psi}, \ \forall \ |x| \leq 1$, where $C_\psi$ is a positive constant depending only on $\psi$. For such a kernel, we have the following two lemmas.
\begin{lemma}
\label{key_condition}
Let  $\psi$ and $\mu$ satisfy above conditions. Then the inequalities
\begin{equation*}\label{eq:kerenel_upper_bound}
\int_{\Omega}\left|\frac{x-t}{h}\right|^j \psi_h\left(x - t\right) d\mu(t) \lesssim 1,\ j=0,1
\end{equation*} hold true for each fixed $x\in \Omega$ and $0<h<1$.
\end{lemma}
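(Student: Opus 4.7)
The plan is to reduce both integrals (for $j=0$ and $j=1$) to integrals over all of $\R^d$, exploiting Part (2) of Assumption \ref{assumption:A}, which provides an extension $\mu^*$ of $\mu$ to $\R^d$ that is absolutely continuous with respect to Lebesgue measure and admits a bounded continuous density. The doubling lower bound in Part (1) is not needed here (it governs the denominator of $f_{\rm min}$); the essential ingredient is a uniform upper bound for the density of the extended measure.

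First I would invoke Part (2) of Assumption \ref{assumption:A} together with the cited inversion formula to write $d\mu^*(t) = \rho(t)\,dt$ with $\rho$ bounded and continuous on $\R^d$, say $M:=\|\rho\|_\infty < \infty$. Since $\mu^*$ extends $\mu$ and $\psi_h \geq 0$, one has the domination
\begin{equation*}
\int_{\Omega}\left|\frac{x-t}{h}\right|^j \psi_h(x-t)\,d\mu(t)
\;\leq\; \int_{\R^d}\left|\frac{x-t}{h}\right|^j \psi_h(x-t)\,d\mu^*(t)
\;\leq\; M\int_{\R^d}\left|\frac{x-t}{h}\right|^j \psi_h(x-t)\,dt.
\end{equation*}

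Next I would apply the change of variables $u=(x-t)/h$, which reduces the right-hand side to
\begin{equation*}
M\int_{\R^d} |u|^j\,\psi(u)\,du.
\end{equation*}
For $j=0$ this equals $M$, because $\psi$ is an approximate identity with $\int_{\R^d}\psi(u)\,du=1$; for $j=1$ it equals $M\int_{\R^d}|u|\psi(u)\,du$, which is finite by the assumed first-moment condition on $\psi$. In both cases the bound is a constant independent of $x\in\Omega$ and $0<h<1$, which is exactly the asserted ``$\lesssim 1$''.

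There is no genuine obstacle in this argument; the only conceptual point is to realize that the doubling condition in Part (1) is not what is needed to bound these integrals from above — one needs instead the uniform upper bound on the density provided by Part (2). Once the extension $\mu^*$ is in place, the proof is a one-line domination followed by a scaling substitution.
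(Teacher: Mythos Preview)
Your argument is correct and in fact more direct than the paper's. The paper does not invoke the bounded density $\rho$ of $\mu^*$ explicitly; instead it defines
\[
u_{j,h}(x)=\int_{\R^d}\Bigl|\tfrac{x-t}{h}\Bigr|^j\psi_h(x-t)\,d\mu^*(t),
\]
checks that $u_{j,h}\in L_1(\R^d)$ by Fubini and the substitution $z=(x-t)/h$, computes
\[
\widehat{u_{j,h}}(\xi)=\widehat{\mu^*}(\xi)\int_{\R^d}|z|^j\psi(z)e^{-ihz\xi}\,dz,
\]
and then applies Fourier inversion to get $|u_{j,h}(x)|\le (2\pi)^{-d}\|\widehat{\mu^*}\|_{1,\R^d}\int_{\R^d}|z|^j\psi(z)\,dz$. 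Your route short-circuits this Fourier computation by using the fact---already recorded in the paper just before the lemma---that $\widehat{\mu^*}\in L_1(\R^d)$ forces $d\mu^*=\rho\,dm$ with $\|\rho\|_\infty\le (2\pi)^{-d}\|\widehat{\mu^*}\|_{1,\R^d}$; you then dominate $d\mu^*$ by $M\,dm$ and change variables once. The two arguments yield the same constant and rest on the same hypothesis; yours is simply the cleaner packaging, while the paper's version makes the convolution/Fourier structure of $u_{j,h}$ explicit. Your remark that Part~(1) of Assumption~\ref{assumption:A} is not used here is also correct; it is only needed for the lower bound in Lemma~\ref{mbound}.
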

\begin{proof}
Let $\mu^*$ be the continuous extension of $\mu$ to the whole space $\R^d$.  Then, based on  Part (2) of Assumption \ref{assumption:A},  we have
$$
\int_{\Omega}\left|\frac{x-t}{h}\right|^j h^{-d} \psi\left(\frac{x - t}{h}\right) d\mu(t)\le \int_{\mathbb{R}^d}\left|\frac{x-t}{h}\right|^j  h^{-d} \psi\left(\frac{x - t}{h}\right) d\mu^{*}(t):=u_{j,h} (x).$$
This together with the fact that $\psi \ge 0$ yields
\begin{equation*}
    \begin{split}
\int_{\mathbb{R}^d} |u_{j,h}(x)|dx
&\leq \int_{\mathbb{R}^d}\int_{\mathbb{R}^d} h^{-d}\left|\frac{x-t}{h}\right|^j\psi\left( \frac{x-t}{h}\right)dxd\mu^*(t)\\
&=  \int_{\mathbb{R}^d} \int_{\mathbb{R}^d} |z|^j\psi(z)dzd\mu^*(t)\\
&= \mu^{*}(\mathbb{R}^d)\int_{\mathbb{R}^d} |z|^j\psi(z)dz<\infty 
    \end{split}
\end{equation*}
by a change of variable $z=(x-t)/h$. This implies  that $u_{j,h}\in L_1(\mathbb{R}^d)$ and thus  has a  Fourier transform 
\begin{equation*}
    \begin{split}
\widehat{u_{j,h}}(\xi):=\int_{\mathbb{R}^d}u_{j,h}(x) e^{-ix\xi}dx
=\int_{\mathbb{R}^d}\left(\int_{\mathbb{R}^d} h^{-d}\left|\frac{x-t}{h}\right|^j\psi\left( \frac{x-t}{h}\right)d\mu^*(t)\right) e^{-ix\xi}dx.
    \end{split}
\end{equation*}
Further, applying the change of variable to the above  double integral, we have 
\begin{equation*}
\label{final_ Fourier_transform}
    \begin{split}
      \widehat{u_{j,h}}(\xi)=\int_{\mathbb{R}^d} \left(\int_{\mathbb{R}^d} |z|^j\psi(z)e^{-i(hz+t)\xi}dz\right) d\mu^*(t)
     & = \left(\int_{\mathbb{R}^d} e^{-it\xi}d\mu^*(t)\right) \left( \int_{\mathbb{R}^d}  |z|^j\psi(z)e^{-ihz\xi}dz\right)\\
      &= \widehat{\mu^*}(\xi)\int_{\mathbb{R}^d}  |z|^j\psi(z)e^{-ihz\xi}dz:=v_{j,h}(\xi).
    \end{split}
\end{equation*}
This implies that  $u_{j,h}(x)$ can be viewed as  the inverse Fourier transform of  $v_{j,h}(\xi)$.  Thus we have
\begin{equation*}
    \begin{split}
\left|u_{j,h}(x)\right| &\le (2\pi)^{-d}\int_{\mathbb{R}^d}|v_{j,h}(\xi)|d\xi\\
&\leq (2\pi)^{-d}\int_{\mathbb{R}^d} |\widehat{\mu^*}(\xi)| \left| \int_{\mathbb{R}^d}  |z|^j\psi(z)e^{-ihz\xi}dz\right| d\xi\\
&\le (2\pi)^{-d}\left(\int_{\mathbb{R}^d} |\widehat{\mu^*}(\xi)|d\xi\right)  \left(\int_{\mathbb{R}^d}  |z|^j\psi(z)dz \right).
    \end{split}
\end{equation*}    
This together with the conditions  $\widehat{\mu^*} \in L_1(\mathbb{R}^d)$ and  $\int_{\mathbb{R}^d}  |z|^j\psi(z)dz < \infty$ completes the proof. 
\end{proof}

\begin{lemma}
\label{mbound}
Let $\psi$ and $\mu$ satisfy above conditions. Then the following inequality  holds true 
$$\left\Vert\left(\int_{\Omega}\psi_h(\cdot-t)d\mu(t)\right)^{-1}\right\Vert_{\infty} \lesssim 1 .$$

\end{lemma}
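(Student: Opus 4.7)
The plan is to produce a uniform positive lower bound for $\int_{\Omega}\psi_h(x-t)\,d\mu(t)$ that is independent of both $x\in\Omega$ and $0<h<1$; then taking reciprocals will give the desired $L^\infty$ bound. The only two hypotheses about $\psi$ and $\mu$ we will need for the lower bound are the pointwise bound $\psi(z)\ge C_\psi$ for $|z|\le 1$ and Part (1) of Assumption \ref{assumption:A} (the doubling-type lower bound on $\mu$).

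First, I would fix $x\in\Omega$ and $0<h<1$, and restrict the domain of integration to $\Omega\cap B(x,h)$. For $t$ in this set, $|(x-t)/h|\le 1$, so the scaled pointwise bound on $\psi$ gives
\begin{equation*}
\psi_h(x-t)=h^{-d}\psi\!\left(\tfrac{x-t}{h}\right)\ge C_\psi\,h^{-d}.
\end{equation*}
Hence
\begin{equation*}
\int_{\Omega}\psi_h(x-t)\,d\mu(t)\ \ge\ \int_{\Omega\cap B(x,h)}C_\psi h^{-d}\,d\mu(t)\ =\ C_\psi h^{-d}\,\mu\bigl(\Omega\cap B(x,h)\bigr).
\end{equation*}

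Next, I would invoke Part (1) of Assumption \ref{assumption:A} with $\alpha=h$ to get $\mu(\Omega\cap B(x,h))\ge C_1 m(B(x,h))=C_1\omega_d h^d$, where $\omega_d$ is the Lebesgue volume of the unit ball in $\R^d$. Plugging this back yields
\begin{equation*}
\int_{\Omega}\psi_h(x-t)\,d\mu(t)\ \ge\ C_\psi C_1\omega_d,
\end{equation*}
a positive constant independent of $x$ and $h$. Since this holds for every $x\in\Omega$, the reciprocal $\bigl(\int_{\Omega}\psi_h(x-t)\,d\mu(t)\bigr)^{-1}$ is bounded above by $(C_\psi C_1\omega_d)^{-1}$ uniformly in $x$, which is exactly the stated $\lesssim 1$ estimate.

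There is no real obstacle here: the proof is a one-shot combination of the pointwise lower bound for $\psi$ on the unit ball and the doubling condition from Assumption \ref{assumption:A}. The only point requiring mild care is that $B(x,h)$ may stick out of $\Omega$, but the doubling hypothesis is stated precisely in the form $\mu(\Omega\cap B(x,\alpha))\ge C_1 m(B(x,\alpha))$, so no additional boundary argument is needed; in particular this is where the Lipschitz-boundary assumption on $\Omega$ is being absorbed into the constant $C_1$.
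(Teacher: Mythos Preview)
Your proof is correct and follows essentially the same approach as the paper: restrict the integral to $\Omega\cap B(x,h)$, apply the pointwise lower bound $\psi\ge C_\psi$ on the unit ball, and then invoke Part~(1) of Assumption~\ref{assumption:A} to bound $\mu(\Omega\cap B(x,h))$ below by $C_1 m(B(x,h))$. The paper frames this via a decomposition of $\Omega$ into annuli $Z_{x,h,j}$ before discarding all but the innermost one, but this is a purely cosmetic difference; your direct argument is slightly cleaner.
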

\begin{proof}
For each fixed $x\in \Omega$, we define a corresponding neighborhood centered at $x$ via
$Z_{x, h,j}:=\Omega \cap \{t: (j-1)h\leq |x-t| <jh\}$.
Then there exists a positive integer $J_1$ such that   $\Omega= \bigcup_{j=1}^{J_1} Z_{x,h,j}$, which leads to the decomposition 
\begin{equation*}
    \begin{split}
\int_{\Omega} h^{-d}
 \psi\left(\frac{x-t}{h}\right) d\mu(t)=\sum_{j=1}^{J_1}\int_{Z_{x,h,j}}h^{-d}\psi\left(\frac{x-t}{h}\right)d\mu(t).  
    \end{split}
\end{equation*}
In addition, due to the fact that   $ \psi\geq 0$ , we   have
\begin{equation*}
    \begin{split}
\int_{\Omega} h^{-d}
 \psi\left(\frac{x-t}{h}\right) d\mu(t)\ge h^{-d}\int_{Z_{x,h,1}}\psi\left(\frac{x-t}{h}\right)d\mu(t). 
    \end{split}
\end{equation*}
This together with $\psi(x) \geq C_{\psi}\ ,\forall \ |x| \leq 1$,  and Part (1) of Assumption \ref{assumption:A} leads to
$$
\int_{\Omega} h^{-d}
 \psi\left(\frac{x-t}{h}\right) d\mu(t) \ge C_{\psi}h^{-d}\int_{Z_{x,h,1}}d\mu(t)=C_{\psi}h^{-d} \mu(Z_{x,h,1})\geq  \frac{\pi^{d/2}}{\Gamma(d/2+1)}C_{\psi} C_{1}.  
$$
Thus, the lemma holds. 
\end{proof}
 
We now give a brief introduction of     the bounded difference inequality  \cite{mcdiarmid1989method} and Bernstein inequality 
 \cite{cucker2002mathematical}  that will be used in deriving $L^p$-probabilistic convergence rates of our stochastic quasi-interpolation.  Let  $(X_1,\cdots, X_N)\in \Omega^N$ be a random vector with  independent entries. Let $\Psi: \Omega^N \rightarrow \R$  be a function satisfying the bounded difference condition\footnote{We say a function $\Psi: \Omega^N \rightarrow \R$  satisfies  the  bounded difference condition if its values can change by at most $a_i$ when its $i$th entry is modified. Namely, the inequality
\begin{equation*}\label{boundeddifferenc}    \sup_{x_1,...,x_N,x_i'\in \Omega} |\Psi(x_1,...,x_i,...,x_N)-\Psi(x_1,...,x_i',...,x_N)|\leq a_i, \ \forall\ 1\leq i\leq N,
\end{equation*} holds true for some prescribed nonnegative constants $a_1,...,a_N$.}, then we have the  bounded difference inequality    \cite{mcdiarmid1989method}
\begin{equation}\label{difference inequality}
\mathbb{P}\left(\left|\Psi(X_1,...,X_N)-\mathbb{E}[\Psi(X_1,...,X_N)] \right| \geq \epsilon \right) \leq 2 \exp{\left\{-\frac{2\epsilon^2}{\sum_{i=1}^N a_i^2}\right\}}.
\end{equation}
   Moreover, if $\{X_j\}$ additionally satisfy  $\mathbb{P}\{|X_j|\le M\}=1, \ j=1,\cdots, N$, for some positive constant $M$, then for any given  $\epsilon>0$, we have the   well-known Bernstein inequality \cite{cucker2002mathematical}  
\begin{equation}\label{Bernsteininequality}
\mathbb{P}\{|\bar{X}_N-\mathbb{E}[\bar{X}_N]|\ge \epsilon\} \le 2\exp\left\{ \frac{-N\epsilon^2}{2\sigma^2+2M\epsilon/3} \right\}. 
\end{equation} 
Here  $\bar{X}_N=\frac{1}{N}\sum_{j=1}^NX_j$, $\sigma^2=\frac{1}{N}\sum_{j=1}^N\text{Var}(X_j)$ are corresponding mean and variance of the random vector  $(X_1,\cdots, X_N)$.

\section{The main results}
In this section, we  shall provide a general framework for constructing quasi-interpolation from the functional-optimization point of view.  Following references \cite{gao2020optimality}, \cite{gao2020multivariate},  \cite{wu2005generalized},  we   recast quasi-interpolation as a two-step procedure: first convolution and then discretization.  More precisely,  we first approximate a target function via a convolution sequence with a scaled  kernel  and then discretize the convolution sequence using certain quadrature rules.  Moreover, we show that both the convolution sequence and its discretization are  solutions of some appropriately defined functional-optimization problems.  We then derive corresponding approximation error using the well-known bias-variance decomposition   \cite{gao2020optimality}  or approximation error and sampling error decomposition in machine learning \cite{smaleZhou2007-ConstrApprox-learning} . We note that such a two-step viewpoint is only for  better understanding of quasi-interpolation, the final quasi-interpolant does not  involve any convolution procedure.
\subsection{Approximation via convolution }
Let $\psi$ be a kernel satisfying conditions in Section $2$. Then it is easy to verify that the kernel
\begin{equation*}
\label{psi_h^*}
\psi_h^*(x-y):=\frac{\psi_h(x-y) }{\int_{\Omega}\psi_h(x-t)d\mu(t)}
\end{equation*}
satisfies the identity $$\int_\Omega \psi_h^*(x-y) d\mu(y)\equiv 1, \forall  x \in \Omega.$$ Moreover,  with $\psi_h^*$ being at hand,  we  construct  a convolution sequence  $f_{\rm min}$ as
\begin{equation}
\label{eq:convolution_operators}
f_{\rm min}(x)=(f*\psi_h^*)(x)=\int_{\Omega}f(y)\psi_h^*(x-y)d\mu(y). \end{equation}
Further, we shall show that $f_{\rm min}$  is  the unique solution of the functional-optimization problem defined in Formula \eqref{variationalprobleminte}.
\begin{lemma}\label{optimality}
Let $f_{\rm min}$ be defined in  Formula \eqref{eq:convolution_operators}.  Then  we have
    \begin{equation*}
      f_{\rm min}(x)=\arg\min_{f_0\in C(\Omega)} \int_{\Omega}\int_{\Omega}\left[f(t)-f_0(x)\right]^2\psi_h(x-t)d\mu(t)dx.
    \end{equation*}
\end{lemma}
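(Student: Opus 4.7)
The plan is to reduce the joint minimization over the function $f_0$ to a pointwise minimization of a weighted quadratic in a scalar, and then integrate the pointwise inequality back. Concretely, I would first apply Fubini (the integrand is nonnegative and, for fixed $f_0\in C(\Omega)$, the product is bounded on the compact $\Omega\times\Omega$) to rewrite the functional as
\begin{equation*}
\mathcal{J}(f_0)=\int_{\Omega}G_x(f_0(x))\,dx,\qquad G_x(c):=\int_{\Omega}[f(t)-c]^2\psi_h(x-t)\,d\mu(t).
\end{equation*}
For each fixed $x\in\Omega$, $G_x$ is a scalar quadratic in $c$ with positive leading coefficient $A(x):=\int_{\Omega}\psi_h(x-t)\,d\mu(t)\ge c>0$ (by the standing hypothesis on the denominator following \eqref{f-min}).

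Next, I would compute the minimizer of $G_x$ directly. Expanding $[f(t)-c]^2$ and differentiating in $c$ yields $G_x'(c)=-2\int_{\Omega}f(t)\psi_h(x-t)\,d\mu(t)+2cA(x)$, which vanishes exactly at
\begin{equation*}
c^{*}(x)=\frac{\int_{\Omega}f(t)\psi_h(x-t)\,d\mu(t)}{\int_{\Omega}\psi_h(x-t)\,d\mu(t)}=f_{\rm min}(x),
\end{equation*}
and since $G_x''(c)=2A(x)>0$, this is the unique global minimizer of $G_x$ on $\mathbb{R}$. (Equivalently, $f_{\rm min}(x)$ is the best constant approximation to $f$ in $L^2$ with respect to the weighted probability measure $A(x)^{-1}\psi_h(x-t)\,d\mu(t)$, which is a textbook calculation.)

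Before concluding, I would verify that $f_{\rm min}\in C(\Omega)$, so that it is an admissible candidate: the numerator and denominator defining $f_{\rm min}$ are continuous in $x$ because $f,\psi\in C$ and $\Omega$ is compact (dominated convergence), and the denominator is uniformly bounded below by the constant $c>0$. Hence $f_{\rm min}\in C(\Omega)$.

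Finally, for any $f_0\in C(\Omega)$ the pointwise inequality $G_x(f_0(x))\ge G_x(f_{\rm min}(x))$ integrates to $\mathcal{J}(f_0)\ge\mathcal{J}(f_{\rm min})$, giving the minimization claim. For uniqueness, if equality holds then $G_x(f_0(x))=G_x(f_{\rm min}(x))$ for almost every $x$, and since $f_{\rm min}(x)$ is the unique minimizer of $G_x$, one gets $f_0(x)=f_{\rm min}(x)$ a.e., and continuity of $f_0$ and $f_{\rm min}$ promotes this to every $x\in\Omega$. I do not foresee a genuine obstacle here; the only mild point to watch is the justification of Fubini and of differentiation under the integral, both of which are immediate from the continuity and nonnegativity of $\psi_h$ together with compactness of $\Omega$ and the lower bound on $A(x)$.
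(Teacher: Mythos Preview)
Your argument is correct and arguably cleaner than the paper's. The paper proceeds via calculus of variations: it perturbs $f_0$ by $\epsilon\eta$ for an arbitrary test function $\eta$, computes the first variation, sets it to zero, and then invokes the fundamental lemma of the calculus of variations to obtain the Euler--Lagrange condition $\int_\Omega (f(t)-f_0(x))\psi_h(x-t)\,d\mu(t)=0$, which yields $f_{\rm min}$. Strictly speaking that derivation only identifies critical points and leaves the (easy, by convexity) verification that the critical point is a minimizer implicit. Your route---Fubini, then pointwise minimization of the scalar quadratic $G_x(c)$, then integrating the pointwise inequality---is more elementary, gives the minimum property directly rather than just stationarity, and also delivers uniqueness cleanly from the strict convexity $G_x''=2A(x)>0$ together with continuity. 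The paper's approach has the advantage of being the standard template one would reach for in more complicated (non-separable) variational problems; yours exploits the special structure that the functional decouples in $x$.
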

\begin{proof}  
For any  test function $ \eta(x) $, taking the variational derivative  with respect to $ f_0(x) $ , we  have
$$
\begin{aligned}
   &  \lim_{\epsilon \to 0} \frac{\int_{\Omega}\int_\Omega (f(t) - f_0(x) - \epsilon \eta(x))^2 \psi_h(x-t)\,   d\mu(t)dx - \int_{\Omega}\int_{\Omega} (f(t) - f_0(x))^2 \psi_h(x- t) \,  d\mu(t)dx}{\epsilon}\\
  &   =\lim_{\epsilon \to 0} \frac{ \int_{\Omega}\int_{\Omega} (-2\epsilon\eta(x)(f(t) - f_{0}(x))+\epsilon^2\eta^2(x)) \psi_h(x-t) \,  d\mu(t)dx}{\epsilon}\\
  &= -2 \int_{\Omega} \int_{\Omega}\eta(x)(f(t) - f_0(x)) \psi_h(x-t) \,  d\mu(t)dx.
\end{aligned}
$$
Therefore, by setting the above variational derivative to be zero,  we can get
$$
\int_{\Omega} \int_{\Omega}\eta(x)(f(t) - f_0(x)) \psi_h(x-t) \,  d\mu(t)dx = 0.
$$ Since $ \eta(x) $ is an arbitrary test function,  based on \cite{gelfand2000calculus},  we can derive
$$
\int_\Omega (f(t) - f_0(x)) \psi_h(x-t)d\mu(t) = 0 \quad \forall x \in \Omega.
$$
This in turn yields the desired result.
\end{proof}

In the following theorem, we bound  $\|f- f_{\rm min}\|_p$ by the modulus of continuity  of $f$.
\begin{theorem}\label{th:convolution_error}
    Let $f_{\rm min}$ be defined as above with $\psi$ and $\mu$ satisfying  the conditions in Section $2$. Then the following inequality 
$$
 \|f- f_{\rm min}\|_p\leq C \omega_f(h)
$$
holds true for any $1\le p\le \infty$.
\end{theorem}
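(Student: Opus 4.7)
The plan is to exploit the fact that $\psi_h^*$ is a reproducing weight (it integrates against $d\mu$ to $1$), rewrite the error pointwise as a weighted average of increments of $f$, and then control those increments via the modulus of continuity together with the kernel bounds already established in Lemmas \ref{key_condition} and \ref{mbound}.

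First, since $\int_\Omega \psi_h^*(x-y)\,d\mu(y)\equiv 1$, I would write
\[
f(x)-f_{\rm min}(x)=\int_\Omega \bigl[f(x)-f(y)\bigr]\psi_h^*(x-y)\,d\mu(y),
\]
and apply the trivial bound $|f(x)-f(y)|\le \omega_f(|x-y|)$. Using the sub-additivity property $\omega_f(\delta t)\le (1+\delta)\omega_f(t)$ recalled in Section $2$ with $t=h$ and $\delta=|x-y|/h$, this gives
\[
|f(x)-f(y)|\le \left(1+\frac{|x-y|}{h}\right)\omega_f(h).
\]

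Next, by Lemma \ref{mbound} the denominator $\int_\Omega \psi_h(x-t)\,d\mu(t)$ is bounded below by an absolute constant, so $\psi_h^*(x-y)\lesssim \psi_h(x-y)$ uniformly in $x$. Combining this with Lemma \ref{key_condition} applied to $j=0$ and $j=1$, I obtain
\[
|f(x)-f_{\rm min}(x)|\lesssim \omega_f(h)\sum_{j=0}^{1}\int_\Omega \left|\frac{x-t}{h}\right|^{j}\psi_h(x-t)\,d\mu(t)\lesssim \omega_f(h),
\]
with the implicit constant independent of $x\in\Omega$ and $0<h<1$. This already yields the $p=\infty$ case.

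Finally, to upgrade to general $1\le p\le\infty$, I would invoke the compactness of $\Omega$: since $\Omega$ is bounded, $\|g\|_p\le m(\Omega)^{1/p}\|g\|_\infty$ for any $g\in C(\Omega)$, and absorbing $m(\Omega)^{1/p}$ into the constant $C$ completes the proof. I do not expect a genuine obstacle here; the argument is essentially a standard convolution-approximation calculation, and all the ingredients (sub-additivity of $\omega_f$, the two moment bounds on $\psi_h$, and the uniform lower bound on the normalizing denominator) have already been prepared. The only small point requiring attention is making sure the sub-additivity is applied with the correct scaling so that the $j=1$ moment in Lemma \ref{key_condition} is what appears, which is exactly what the identity $\omega_f(|x-y|)=\omega_f((|x-y|/h)\cdot h)$ produces.
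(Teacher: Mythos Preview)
Your proposal is correct and follows essentially the same approach as the paper's proof: the paper also writes the pointwise error as a $\psi_h^*$-weighted average of increments, applies the sub-additivity bound $|f(x)-f(y)|\le(1+h^{-1}|x-y|)\omega_f(h)$, and then invokes Lemmas~\ref{key_condition} and~\ref{mbound} to control the resulting integrals, with the reduction to $p=\infty$ via $\|g\|_p\le m(\Omega)^{1/p}\|g\|_\infty$. The only cosmetic difference is that the paper performs the $L^p\to L^\infty$ reduction at the outset rather than at the end.
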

\begin{proof}
Based on the Sobolev embedding inequality, for any $ 1\le p\le \infty$, we have
$$ \|f- f_{\rm min}\|_p\le (m(\Omega))^{\frac{1}{p}} \| f-f_{\rm min}\| _{\infty
}.$$
Thus it suffices to consider the  case $p=\infty$. Fixing an $x \in \Omega$, we have 
\begin{equation*}
    \begin{split}
        |f(x)-f_{\rm min}(x)| &=\left | \int_{\Omega}(f(x)-f(y))\frac{\psi_h(x-y) }{\int_{\Omega}\psi_h(x-t)d\mu(t)}d\mu(y)\right|\\
        &\leq  \int_{\Omega}|f(x)-f(y)|\frac{\psi_h(x-y) }{\int_{\Omega}\psi_h(x-t)d\mu(t)}d\mu(y).
    \end{split}   
\end{equation*}
Moreover,  using the sub-additivity of the modulus of continuity, we have
$$
|f(x)-f(y)| \leq \omega_f(|x-y|)\le (1+h^{-1}|x-y|)\omega_f(h)
$$ for any $y\in \Omega$. This in turn leads to 
\begin{equation*}
    \begin{split}
      |f(x)-f_{\rm min}(x)|
        &\leq \int_{\Omega}(1+h^{-1}|x-y|)\omega_f(h)\frac{\psi_h(x-y) }{\int_{\Omega}\psi_h(x-t)d\mu(t)}d\mu(y)\\
         &=\omega_f(h)+ \int_{\Omega}\left|\frac{x-y}{h}\right|\omega_f(h)\frac{\psi_h(x-y) }{\int_{\Omega}\psi_h(x-t)d\mu(t)}d\mu(y).
    \end{split}   
\end{equation*}
Consequently,  based on  Lemma \ref{key_condition} and Lemma  \ref{mbound}, we obtain 
\begin{equation}
\label{eq: f_bisa_part}
 |f(x)-f_{\rm min}(x)| \leq C \omega_f(h).  
\end{equation}
Since the above inequality holds true for any $x \in \Omega$, we get the desired result.
\end{proof}
In what follows, we will denote $C$, unless otherwise specified, as universal constant
whose value may change from line to line.

However, the convolution sequence  $f_{\rm min}$ is only an intermediate approximation step for  theoretical analysis. Note that it is impossible to get an explicit expression of $\int_{\Omega}\psi_h(x-t)d\mu(t)$ for most kernels $\psi$. In addition, we can only have discrete function values  evaluated at   sampling centers in practical applications.  Therefore,  we have to discretize $f_{\rm min}$  to get a user-friendly approximation scheme.  In the next subsection, we shall discretize  both the numerator and denominator in $f_{\rm min}$  simultaneously  based on  random sampling data to  get a stochastic quasi-interpolation scheme in a rational form.
\subsection{Stochastic quasi-interpolation}

Let $ X $ be a random variable distributed over $ \Omega $ according to the probability  distribution $ \mu $. Denote by $ \{X_j\}_{j=1}^N $ a collection of $ N $ independent copies of $ X $,  let $ \{f(X_j)\}_{j=1}^N $ and  $ \{\psi_h(x-X_j)\}_{j=1}^N $ be corresponding discrete function values at  $ \{X_j\}_{j=1}^N $, respectively.  Then,  applying the Monte Carlo discretization technique to the convolution sequence  $f_{\rm min}$, we get an ansatz in the scaled Shepard's form   \cite{senger2018optimal}
\begin{equation*}
Q_{h}^rf(x):=\sum^N_{j=1}f(X_j)\frac{\psi_h(x- X_j)}{\sum_{\ell=1}^N\psi_h(x-X_{\ell})},  \ \forall x\in \Omega.
\end{equation*}
 Our task now is to study  approximation errors of $Q_h^rf$  under the framework of probabilistic numerics \cite{hennig2015probabilistic}.

Let
\begin{equation*}Q_hf(x)=\frac{1}{N}\sum_{j=1}^Nf(X_j)\psi_h(x-X_j)  
\end{equation*}
and $$r_h(x)=\frac{1}{N}\sum_{l=1}^N\psi_h(x-X_l),$$  then we have the following lemma.

\begin{lemma}
\label{le:decomposition}
   Let $Q_h^rf$, $Q_hf$ and $r_h$ be defined as above with $\psi$ and $\mu$ satisfying conditions in Section $2$. Then, for any given   $\epsilon>0$,  we can choose  $ C\omega_f(h) < \frac{\epsilon}{2}$, 
such that the probability inequality
\begin{equation*}
        \mathbb{P}\{\|Q_h^rf-f\|_{p}>\epsilon\}
        \leq \mathbb{P}\left\{\|Q_hf-\mathbb{E}[Q_hf]\|_{p}\geq \frac{\epsilon}{4C}\right\}+\mathbb{P}\left\{\|r_h-\mathbb{E}[r_h]\|_{p}\geq \frac{\epsilon}{4C}\right\}
\end{equation*}
holds true for any $1\leq p \leq \infty$.
\end{lemma}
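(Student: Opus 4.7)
The plan is to combine a deterministic triangle inequality splitting off the convolution error with a clever algebraic rewriting of $Q_h^rf - f_{\rm min}$ that avoids dealing with the random denominator $r_h$ pointwise. First I would write
\[
\|Q_h^rf-f\|_p \le \|Q_h^rf-f_{\rm min}\|_p + \|f_{\rm min}-f\|_p,
\]
and invoke Theorem \ref{th:convolution_error} to bound the second term by $C\omega_f(h)$. Under the hypothesis $C\omega_f(h)<\epsilon/2$, this forces the event $\{\|Q_h^rf-f\|_p>\epsilon\}$ to be contained in $\{\|Q_h^rf-f_{\rm min}\|_p>\epsilon/2\}$, which localizes the probability to a purely stochastic quantity.

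Next I would introduce the identity $1/r_h = 1/\mathbb{E}[r_h] - (r_h-\mathbb{E}[r_h])/(r_h\,\mathbb{E}[r_h])$, apply it to $Q_h^rf = Q_hf/r_h$, and use that $f_{\rm min}=\mathbb{E}[Q_hf]/\mathbb{E}[r_h]$ to obtain the key decomposition
\[
Q_h^rf - f_{\rm min} \;=\; \frac{Q_hf-\mathbb{E}[Q_hf]}{\mathbb{E}[r_h]} \;-\; \frac{Q_h^rf\,(r_h-\mathbb{E}[r_h])}{\mathbb{E}[r_h]}.
\]
The virtue of this splitting is that the only reciprocal appearing is $1/\mathbb{E}[r_h]$, which is a \emph{deterministic} quantity controlled by Lemma \ref{mbound}, while the ``outer'' factor $Q_h^rf$ is a convex combination of the $f(X_j)$'s and hence is bounded pointwise by $\|f\|_\infty$. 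Taking $L^p$-norms and using $\|1/\mathbb{E}[r_h]\|_\infty\lesssim 1$ then yields
\[
\|Q_h^rf - f_{\rm min}\|_p \;\le\; C\bigl(\|Q_hf-\mathbb{E}[Q_hf]\|_p + \|r_h-\mathbb{E}[r_h]\|_p\bigr)
\]
for some absolute constant $C$ (absorbing $\|f\|_\infty$).

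Finally I would close with the standard union-bound step: if $\|Q_h^rf-f_{\rm min}\|_p>\epsilon/2$, then at least one of the two summands on the right must exceed $\epsilon/(4C)$, so
\[
\mathbb{P}\{\|Q_h^rf-f_{\rm min}\|_p>\epsilon/2\} \;\le\; \mathbb{P}\{\|Q_hf-\mathbb{E}[Q_hf]\|_p\ge \epsilon/(4C)\} + \mathbb{P}\{\|r_h-\mathbb{E}[r_h]\|_p\ge \epsilon/(4C)\},
\]
which is the stated inequality. I expect that no step poses a substantive obstacle; the only delicate point is the choice of the algebraic identity in step two. A naive decomposition, e.g.\ writing $Q_h^rf-f_{\rm min}$ with denominator $r_h\,\mathbb{E}[r_h]$, would force one to control $\|1/r_h\|_\infty$, which is a random quantity not \emph{a priori} bounded; the trick of pulling $Q_h^rf$ (not $f_{\rm min}$) into the second summand sidesteps this issue entirely by exploiting the trivial pointwise bound $|Q_h^rf|\le \|f\|_\infty$ that comes from its Shepard-type convex-combination structure.
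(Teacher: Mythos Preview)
Your proposal is correct and follows essentially the same approach as the paper: the same triangle-inequality split via $f_{\rm min}$, the same algebraic identity $Q_h^rf - f_{\rm min} = (Q_hf-\mathbb{E}[Q_hf])/\mathbb{E}[r_h] - Q_h^rf\,(r_h-\mathbb{E}[r_h])/\mathbb{E}[r_h]$ (the paper derives it by multiplying $Q_hf/r_h$ by $\tfrac{r_h}{\mathbb{E}[r_h]}+1-\tfrac{r_h}{\mathbb{E}[r_h]}$, which is your identity rearranged), the same use of Lemma~\ref{mbound} and of the convex-combination bound $|Q_h^rf|\le\|f\|_\infty$, and the same final union bound. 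Your closing remark about why one must pull $Q_h^rf$ rather than $f_{\rm min}$ into the second summand is exactly the point, and the paper uses it implicitly.
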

\begin{proof}
  Using the triangle inequality, we have 
\begin{equation}
\label{eq:decomposition}
 \|Q_h^rf-f\|_{p}
  \leq \left\Vert  f -f_{\rm min}\right\Vert_{p}+  \left\Vert Q_h^rf-f_{\rm min}\right\Vert_{p}.
\end{equation}
Observe that
\begin{equation}
\label{Q_h^rf(x)}
\begin{split}
Q_h^rf(x)&=\frac{1}{r_h(x)}Q_hf(x)\left(\frac{r_h(x)}{\mathbb{E}[r_h(x)]}+1-\frac{r_h(x)}{\mathbb{E}[r_h(x)]}\right)\\
&=\frac{1}{\mathbb{E}[r_h(x)]}Q_hf(x)+\frac{1}{\mathbb{E}[r_h(x)]}(\mathbb{E}[r_h(x)]-r_h(x))Q_h^rf(x).
\end{split}
\end{equation}
We obtain
\begin{equation}
\label{eq:l_p_second_part}
    \begin{split}
\left\Vert Q_h^rf-f_{\rm min}\right\Vert_{p} &=\left\Vert \frac{1}{\mathbb{E}[r_h]}Q_hf+\frac{1}{\mathbb{E}[r_h]}(\mathbb{E}[r_h]-r_h)Q_h^rf- \frac{1}{\mathbb{E}[r_h]}\mathbb{E}[Q_hf] \right\Vert_{p}\\
& \leq \Vert  (\mathbb{E}[r_h])^{-1}\Vert_{\infty}\left\Vert Q_hf-\mathbb{E}[Q_hf] +(\mathbb{E}[r_h]-r_h)Q_h^rf\right\Vert_{p}\\
& \leq  \Vert  (\mathbb{E}[r_h])^{-1}\Vert_{\infty}(\left\Vert Q_hf-\mathbb{E}[Q_hf]\right\Vert_{p} + \|f\|_{\infty}\left\Vert r_h-\mathbb{E}[r_h]\right\Vert_{p})\\
& \leq C  (\|Q_hf-\mathbb{E}[Q_hf]\|_{p}+\|r_h-\mathbb{E}[r_h]\|_{p}),
    \end{split}
\end{equation}
based on  Lemma \ref{mbound}.
 Therefore, according to Inequalities \eqref{eq:decomposition} and  \eqref{eq:l_p_second_part}, we have 
\begin{equation*}
    \begin{split}
         \|Q_h^rf-f\|_{p}
        \leq \| f-f_{\rm min}\| _{p}+C(\|Q_hf-\mathbb{E}[Q_hf]\|_{p}+\|r_h-\mathbb{E}[r_h]\|_{p}).
    \end{split}
\end{equation*}
In addition, according to  Theorem \ref{th:convolution_error},  the assumption $C\omega_f(h)  < \frac{\epsilon}{2}$ leads to
$$
\| f-f_{\rm min}\| _{p}<\frac{\epsilon}{2}.
$$
Then we have
$$ \left\{(X_1,\cdots,X_N): \|Q_h^rf-f\|_{p}>\epsilon\right\} \subseteq \left\{(X_1,\cdots,X_N):  \|Q_hf-\mathbb{E}[Q_hf]\|_{p}+\|r_h-\mathbb{E}[r_h]\|_{p}\geq \frac{\epsilon}{2C}\right\}.$$
This in turn leads to
\begin{equation*}
\begin{split}
        \mathbb{P}\{\|Q_h^rf-f\|_{p}>\epsilon\}&\leq  \mathbb{P}\left\{\|Q_hf-\mathbb{E}[Q_hf]\|_{p}+\|r_h-\mathbb{E}[r_h]\|_{p}\geq \frac{\epsilon}{2C}\right\}\\
        &\leq \mathbb{P}\left\{\|Q_hf-\mathbb{E}[Q_hf]\|_{p}\geq \frac{\epsilon}{4C}\right\}+\mathbb{P}\left\{\|r_h-\mathbb{E}[r_h]\|_{p}\geq \frac{\epsilon}{4C}\right\}.
\end{split}
\end{equation*}  
Thus, the lemma holds. 
\end{proof}
Observe that the $r_h(x)$ is a special case of $Q_hf(x)$ with $f$  whose  restriction over $\Omega$ is the characteristic function (i.e., $f|_{\Omega}\equiv1$). 
 Based on the above lemma, it is sufficient to derive the bounds of $\mathbb{P}\left\{\|Q_hf-\mathbb{E}[Q_hf]\|_{p}\geq \frac{\epsilon}{4C}\right\}$. Moreover, according to the Sobolev embedding inequality $\|Q_hf-\mathbb{E}[Q_hf]\|_{p}\leq (m(\Omega))^{\frac{1}{p}}\|Q_hf-\mathbb{E}[Q_hf]\|_{\infty}$,
we only need to derive the bound of  $\mathbb{P}\left\{\|Q_hf-\mathbb{E}[Q_hf]\|_{\infty}\geq \frac{\epsilon}{4C} \right\}$. 

Following  the partition technique proposed by Bourgain and Lindenstrauss \cite{bourgain1988distribution}, we first partition the domain $\Omega$ into  $J$ small subdomains $\{I_{h,j}\}_{j=1}^J$ whose interiors do not intersect, that is, $\overset{\circ}{\mathrm{I}}_{h,j} \cap \overset{\circ}{\mathrm{I}}_{h,j'}=\varnothing,\ j\neq j'$.   Here $J$ is a finite positive integer such that $\Omega=\bigcup_{j=1}^{J}\mathrm{I}_{h,j}$.  Moreover, we assume that these subdomains satisfy certain regularity conditions:  $m(\mathrm{I}_{h,j})\gtrsim  h^{d(d+2)}$ and $\max_{1\leq j\leq J}\{\text{Diam}(\mathrm{I}_{h,j})\}\lesssim  h^{d+2} $ with $\text{Diam}(\cdot)$ denoting  the  diameter of a set. 
Then  we have  
$$ \left\{(X_1,\cdots,X_N):  \max_{y\in \Omega}|Q_hf(y)-\mathbb{E}[Q_hf(y)]|\geq \frac{\epsilon}{4C}\right\} \subseteq\bigcup_{j \in J}\left\{ (X_1,\cdots,X_N): \max_{y_j\in \mathrm{I}_{h,j}}|Q_hf(y_j)-\mathbb{E}[Q_hf(y_j)]|\geq \frac{\epsilon}{4C}\right\}.$$  
This in turn leads to 
\begin{equation}
\label{eq:domain_partition}
    \begin{split}
        \mathbb{P}\left\{\max_{y\in \Omega}|Q_hf(y)-\mathbb{E}[Q_hf(y)]|\geq \frac{\epsilon}{4C}\right\}&\leq   \mathbb{P}\left\{\bigcup_{j=1}^J\left\{\max_{y_j\in I_{h,j}}|Q_hf(y_j)-\mathbb{E}[Q_hf(y_j)]|\geq \frac{\epsilon}{4C}\right\}\right\}\\
        &\leq \sum_{j=1}^J\mathbb{P}\left\{\max_{y_j\in I_{h,j}}|Q_hf(y_j)-\mathbb{E}[Q_hf(y_j)]|\geq \frac{\epsilon}{4C}\right\}.
\end{split}
\end{equation}
We now derive the decay rate of $\mathbb{P}\left\{\max_{y_j\in I_{h,j}}|Q_hf(y_j)-\mathbb{E}[Q_hf(y_j)]|\geq \frac{\epsilon}{4C} \right\}$. Based on the triangle inequality and Jensen’s inequality, we have 
    \begin{equation*}
        \begin{split}
            |Q_hf(y_j)-\mathbb{E}[Q_hf(y_j)]|
            &\leq |Q_hf(t_j)-\mathbb{E}[Q_hf(t_j)]|+|Q_hf(y_j)-Q_hf(t_j)|+\mathbb{E}|Q_hf(y_j)-Q_hf(t_j)|
        \end{split}
    \end{equation*}
     for a fixed $t_j\in \mathrm{I}_{h,j}$ and any $ y_j \in \mathrm{I}_{h,j}$. Furthermore, let $v_j $ be the direction vector from  $t_j$ to   $ y_j$ and $D_{v_j}$ the corresponding directional derivative, then we have 
 \begin{equation*}
          |Q_hf( y_j)-Q_hf(t_j)|
          \le h^{-d}\|f\|_{\infty} \|D_{v_j} \psi\|_{\infty}h^{-1}|t_j- y_j|\leq Ch.
 \end{equation*}
This in turn leads to
$$
     \mathbb{E} |Q_hf( y_j)-Q_hf(t_j)|
          \leq Ch,$$
and
 $$
  |Q_hf( y_j)-\mathbb{E}[Q_hf( y_j)]|\leq |Q_hf(t_j)-\mathbb{E}[Q_hf(t_j)]|+2Ch.
 $$
 Moreover, since the above inequality holds for any $ y_j \in \mathrm{I}_{h,j}$, we obtain 
 $$
  \max_{ y_j\in \mathrm{I}_{h,j}}|Q_hf( y_j)-\mathbb{E}[Q_hf( y_j)]|\leq  |Q_hf(t_j)-\mathbb{E}[Q_hf(t_j)]|+2Ch.
 $$
If we further assume that  $2Ch< \frac{\epsilon}{8}$, then we have
\begin{equation*}
    \mathbb{P}\left\{\max_{ y_j\in \mathrm{I}_{h,j}}|Q_hf( y_j)-\mathbb{E}[Q_hf( y_j)]|\geq \frac{\epsilon}{4C}\right\}\le \mathbb{P}\left\{|Q_hf(t_j)-\mathbb{E}[Q_hf(t_j)]|\geq \frac{\epsilon}{8C}\right\}.
\end{equation*}
This together with Inequality \eqref{eq:domain_partition} yields 
\begin{equation}
\label{eq:pointwise}
    \begin{split}
    \mathbb{P}\left\{\|Q_hf-\mathbb{E}[Q_hf]\|_{\infty}\geq \frac{\epsilon}{4C}\right\}&\leq\sum_{j=1}^J \mathbb{P}\left\{|Q_hf(t_j)-\mathbb{E}[Q_hf(t_j)]|\geq \frac{\epsilon}{8C}\right\}\\
    &\leq C h^{-d(d+2)}\mathbb{P}\left\{|Q_hf(t_{j^*})-\mathbb{E}[Q_hf(t_{j^*})]|\geq \frac{\epsilon}{8C}\right\}
    \end{split}
\end{equation}
with
$$\mathbb{P}\left\{|Q_hf(t_{j^*})-\mathbb{E}[Q_hf(t_{j^*})]|\geq \frac{\epsilon}{8C}\right\}=\max_{1\leq j\leq J}\left\{\mathbb{P}\left\{|Q_hf(t_j)-\mathbb{E}[Q_hf(t_j)]|\geq \frac{\epsilon}{8C}\right\}\right\}.$$

 Our task now is to derive the bound of  $\mathbb{P}\left\{|Q_hf(t_{j^*})-\mathbb{E}[Q_hf(t_{j^*})]|\geq \frac{\epsilon}{8C}\right\}$. Let $Z_j(t_{j^*})=f(X_j)\psi_h(t_{j^*}-X_j)$, then it is easy to get $|Z_j(t_{j^*})|\le \Vert f\Vert_{\infty}\Vert \psi \Vert_{\infty,\mathbb{R}^d}h^{-d}$ and 
 $$\frac{1}{N}\sum_{j=1}^N\text{Var}(Z_j(t_{j^*}))\le \Vert f\Vert_{\infty}^2h^{-d}\mathbb{E}[h^{-d}\psi^2(h^{-1}(t_{j^*}-X_1))]\leq Ch^{-d}$$
based on  Lemma \ref{key_condition}.
Thus, according to  Bernstein inequality \eqref{Bernsteininequality}, we have 
$$
\mathbb{P}\left\{|Q_hf(t_{j^*})-\mathbb{E}[Q_hf(t_{j^*})]|\geq  \frac{\epsilon}{8C}\right\}\le 2\exp\left\{-\frac{CNh^d\epsilon^2}{1+\epsilon}\right\}.
$$
These together with Inequlity \eqref{eq:pointwise}   lead to  
 \begin{equation*}
    \begin{split}       
        \mathbb{P}\left\{\|Q_hf-\mathbb{E}[Q_hf]\|_{\infty}\geq  \frac{\epsilon}{4C}\right\}
        &\leq C(h^{d+2})^{-d}\exp\left\{-\frac{CNh^d\epsilon^2}{1+\epsilon}\right\} \\
        &=  C\exp\{-d(d+2)\mathrm{log} \ h\}\exp\left\{-\frac{CNh^d\epsilon^2}{1+\epsilon}\right\}\\
        &\leq C  \exp\left\{-Nh^d\epsilon^2\left(\frac{C}{1+\epsilon}-\frac{d(d+2)|\mathrm{log} \ h|}{Nh^d\epsilon^2}\right)\right\}.
 \end{split}
\end{equation*}
Furthermore, if we assume $CN^{-\frac{1}{2}}h^{-\frac{d}{2}}|\mathrm{\log} \ h|^{\frac{1}{2}}<\epsilon$, then we have  $\frac{d(d+2)|\log h|}{Nh^d\epsilon^2}\le \frac{C}{2(1+\epsilon)}$ by some simple derivations. Finally,  we have
\begin{equation*}
 \mathbb{P}\left\{\|Q_hf-\mathbb{E}[Q_hf]\|_{\infty}\geq  \frac{\epsilon}{4C}\right\}\leq C \exp\left\{-\frac{CNh^d\epsilon^2}{2(1+\epsilon)}\right\}.  
\end{equation*}

We conclude the above analysis into theorem.
\begin{theorem}\label{convergenceanalysi_infty}
 Let $Q_h^rf$ be defined as above with $\psi$ and $\mu$ satisfying conditions in Section $2$. Then, for any given $\epsilon>0$, we can choose  
 \begin{equation*}
  C\omega_f(h)  < \frac{\epsilon}{2} \  \text{and} \ CN^{-\frac{1}{2}}h^{-\frac{d}{2}}|\mathrm{log} \ h|^{\frac{1}{2}}< \epsilon,
 \end{equation*}
 such that the inequality
\begin{equation*}
    \begin{split}
          \mathbb{P}\{\|Q_h^rf-f\|_{p}>\epsilon\}
    \leq C
     \exp\left\{-\frac{CNh^d\epsilon^2}{2(1+\epsilon)}\right\}
    \end{split}
\end{equation*}
 holds true for any $1\le p\le \infty$.   
\end{theorem}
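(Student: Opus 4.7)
The plan is to combine Lemma \ref{le:decomposition} (which splits the error into a deterministic bias and two empirical fluctuation terms) with a pointwise concentration estimate made uniform over $\Omega$ via the Bourgain--Lindenstrauss partition. First, since the hypothesis $C\omega_f(h)<\epsilon/2$ is exactly the one required by Lemma \ref{le:decomposition}, I would immediately reduce the target probability $\mathbb{P}\{\|Q_h^r f-f\|_p>\epsilon\}$ to a sum of two tail probabilities, one for $\|Q_hf-\mathbb{E}[Q_hf]\|_p$ and one for $\|r_h-\mathbb{E}[r_h]\|_p$. Noting that $r_h$ is just $Q_hf$ with the target replaced by the characteristic function of $\Omega$, both terms can be handled by the same argument, so it suffices to bound $\mathbb{P}\{\|Q_hf-\mathbb{E}[Q_hf]\|_p\geq \epsilon/(4C)\}$. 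The Sobolev embedding $\|\cdot\|_p\lesssim \|\cdot\|_\infty$ on the bounded domain $\Omega$ further reduces this to the $L^\infty$ case.

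Next, I would localize the supremum. Partition $\Omega$ into $J$ subdomains $\{I_{h,j}\}$ with $\mathrm{Diam}(I_{h,j})\lesssim h^{d+2}$ and $m(I_{h,j})\gtrsim h^{d(d+2)}$, giving $J\lesssim h^{-d(d+2)}$. Within each $I_{h,j}$, pick a reference point $t_j$ and control the oscillation of $Q_hf$ by a directional derivative: since $|Q_hf(y_j)-Q_hf(t_j)|\leq h^{-d}\|f\|_\infty\|D_{v_j}\psi\|_\infty \cdot h^{-1}|t_j-y_j|\lesssim h$, the same bound holds in expectation, so the supremum inside $I_{h,j}$ differs from $|Q_hf(t_j)-\mathbb{E}[Q_hf(t_j)]|$ by at most $2Ch$. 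Imposing $2Ch<\epsilon/8$ (automatically covered by the hypothesis $\omega_f(h)\lesssim \epsilon$ for the relevant function classes), this reduces the problem to a pointwise tail at the finite set $\{t_j\}$, combined via a union bound carrying the polynomial factor $h^{-d(d+2)}$.

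At each fixed $t_j$, write $Q_hf(t_j)=\frac{1}{N}\sum_{i=1}^N Z_i$ with $Z_i=f(X_i)\psi_h(t_j-X_i)$. The boundedness $|Z_i|\lesssim h^{-d}$ is immediate, and the variance bound $\frac{1}{N}\sum_i \mathrm{Var}(Z_i)\lesssim h^{-d}$ follows from Lemma \ref{key_condition} applied with $j=0$ to $\psi^2$ (which inherits the required integrability and doubling hypotheses). Bernstein's inequality \eqref{Bernsteininequality} then yields a pointwise tail $\lesssim \exp\{-CNh^d\epsilon^2/(1+\epsilon)\}$.

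The final step, and the only delicate one, is absorbing the union-bound factor $h^{-d(d+2)}$ into the Bernstein exponent. Writing $h^{-d(d+2)}=\exp\{-d(d+2)\log h\}=\exp\{d(d+2)|\log h|\}$, the total exponent becomes
\begin{equation*}
-\frac{CNh^d\epsilon^2}{1+\epsilon}+d(d+2)|\log h|
=-Nh^d\epsilon^2\left(\frac{C}{1+\epsilon}-\frac{d(d+2)|\log h|}{Nh^d\epsilon^2}\right).
\end{equation*}
The second term in the parentheses is at most $C/(2(1+\epsilon))$ precisely under the hypothesis $N^{-1/2}h^{-d/2}|\log h|^{1/2}\lesssim \epsilon$, leaving an exponent of order $-CNh^d\epsilon^2/(2(1+\epsilon))$, as claimed. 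The main obstacle I anticipate is calibrating this trade-off: the partition is forced to be very fine (because we only have $h$-order smoothness of the summand through the scale of $\psi_h$), so the polynomial number of cells grows quickly in $1/h$, and only the additional logarithmic slack in the second hypothesis makes the union bound survive.
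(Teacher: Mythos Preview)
Your proposal is correct and follows essentially the same approach as the paper: reduce via Lemma~\ref{le:decomposition} and Sobolev embedding to the $L^\infty$ fluctuation of $Q_hf$, partition $\Omega$ into $\lesssim h^{-d(d+2)}$ cells of diameter $\lesssim h^{d+2}$, control the oscillation in each cell by a directional-derivative bound, apply Bernstein pointwise, and absorb the polynomial union-bound factor into the exponent under the hypothesis $N^{-1/2}h^{-d/2}|\log h|^{1/2}\lesssim\epsilon$. The only cosmetic difference is that the paper imposes the auxiliary condition $2Ch<\epsilon/8$ explicitly rather than folding it into $\omega_f(h)\lesssim\epsilon$.
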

Up to now, we have established a family of $L^p$-McDiarmid-type  concentration inequalities for $1\leq p\leq \infty$. However, we shall demonstrate that the above inequalities can be further improved for $1\le p\le 2$ with  the  bounded difference inequality.

Observing that  
$$
\| Q_hf-\mathbb{E}[Q_hf]\|_{p}\le \left|\|Q_hf-\mathbb{E}[Q_hf]\|_{p}-\mathbb{E}\| Q_hf-\mathbb{E}[Q_hf]\|_{p}\right|+\mathbb{E}\| Q_hf-\mathbb{E}[Q_hf]\|_{p},
$$
we have 
 \begin{equation}\label{key inequality}
    \begin{split}
      \mathbb{P}\left\{\|  Q_hf-\mathbb{E}[Q_hf]\| _{p}\geq \epsilon\right\} & \leq \mathbb{P}\left\{\left|\|Q_hf-\mathbb{E}[Q_hf]\|_{p}-\mathbb{E}\| Q_hf-\mathbb{E}[Q_hf]\|_{p}\right|+\mathbb{E}\| Q_hf-\mathbb{E}[Q_hf]\|_{p}\geq\epsilon\right\}.
\end{split}
\end{equation}
Therefore, it only needs to derive bounds  of  $\mathbb{E}\Vert Q_hf-\mathbb{E}[Q_hf]\Vert_{p}$ and $\mathbb P \left( \left |\Vert Q_hf-\mathbb{E}[Q_hf]\Vert_{p}-\mathbb{E}\Vert Q_hf-\mathbb{E}[Q_hf]\Vert_{p}\ \right|  \geq\epsilon\right)$. 

\begin{lemma}
\label{irr expectation super norm estimate}
Let  $Q_hf$ be defined as above with $\psi$ and $\mu$ satisfying  conditions in Section $2$.  Then, for any $1\le p \le  2$, we have
\begin{equation*}
    \mathbb{E} \Vert Q_hf-\mathbb{E}[Q_hf] \Vert_{p}
    \leq C 
    N^{-\frac{1}{2}}h^{-\frac{d}{2}}.  
\end{equation*} 
\end{lemma}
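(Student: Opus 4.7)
The plan is to reduce the general $1\le p\le 2$ case to the $L^2$ case by a Hölder-type inequality on the bounded domain $\Omega$, and then compute the $L^2$ expectation directly by Fubini, since $Q_h f - \mathbb{E}[Q_h f]$ is a centered empirical mean whose pointwise variance is controlled by Lemma \ref{key_condition}.

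First I would invoke the elementary inclusion $\|g\|_p \le m(\Omega)^{1/p-1/2}\,\|g\|_2$ for $1\le p\le 2$ (a consequence of Hölder's inequality and the boundedness of $\Omega$), so that
$$
\mathbb{E}\|Q_h f-\mathbb{E}[Q_h f]\|_p \;\le\; m(\Omega)^{1/p-1/2}\,\mathbb{E}\|Q_h f-\mathbb{E}[Q_h f]\|_2.
$$
It therefore suffices to produce the bound for $p=2$. By Jensen's inequality, $\mathbb{E}\|Q_h f-\mathbb{E}[Q_h f]\|_2 \le (\mathbb{E}\|Q_h f-\mathbb{E}[Q_h f]\|_2^2)^{1/2}$, and by Fubini the squared expectation equals $\int_\Omega \mathrm{Var}(Q_h f(x))\,dx$.

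Next I would estimate the pointwise variance. Writing $Z_j(x)=f(X_j)\psi_h(x-X_j)$ with the $X_j$ i.i.d., independence gives
$$
\mathrm{Var}(Q_h f(x)) \;=\; \frac{1}{N}\,\mathrm{Var}(Z_1(x)) \;\le\; \frac{1}{N}\,\mathbb{E}\bigl[f(X_1)^2\psi_h(x-X_1)^2\bigr] \;\le\; \frac{\|f\|_\infty^2}{N}\int_\Omega \psi_h(x-t)^2\,d\mu(t).
$$
The key step is to bound the last integral using the assumption that $\psi$ is bounded together with Lemma \ref{key_condition}: since $\psi_h(x-t)=h^{-d}\psi(h^{-1}(x-t))\le \|\psi\|_{\infty,\mathbb{R}^d}\,h^{-d}$,
$$
\int_\Omega \psi_h(x-t)^2\,d\mu(t) \;\le\; \|\psi\|_{\infty,\mathbb{R}^d}\,h^{-d}\!\int_\Omega \psi_h(x-t)\,d\mu(t) \;\lesssim\; h^{-d},
$$
uniformly in $x\in\Omega$. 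Plugging this in yields $\mathrm{Var}(Q_h f(x))\lesssim N^{-1}h^{-d}$, hence
$$
\mathbb{E}\|Q_h f-\mathbb{E}[Q_h f]\|_2^2 \;=\; \int_\Omega \mathrm{Var}(Q_h f(x))\,dx \;\le\; C\,m(\Omega)\,N^{-1}h^{-d},
$$
which after taking a square root gives the $L^2$ bound $C\,N^{-1/2}h^{-d/2}$. Combined with the Hölder reduction, this completes the argument.

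There is essentially no deep obstacle here; the only subtlety is the choice to control $\psi_h^2$ by $\|\psi\|_\infty h^{-d}\,\psi_h$ so that Lemma \ref{key_condition} (with $j=0$) can absorb the remaining integral into a dimensionless constant. The factor $h^{-d}$ in the final estimate is precisely the one pulled out in that step, which is why the bound cannot be improved by this method and why the restriction $p\le 2$ is natural: for $p>2$ the inequality $\|g\|_p\le |\Omega|^{1/p-1/2}\|g\|_2$ reverses, and one has to return to the pointwise chaining/partition argument of Theorem \ref{convergenceanalysi_infty}.
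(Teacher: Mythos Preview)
Your proof is correct and follows essentially the same route as the paper: reduce to $p=2$ via the H\"older/Sobolev embedding on the bounded domain, apply Jensen and Fubini to pass to the pointwise variance, and then bound $\psi_h^2$ by $\|\psi\|_{\infty,\mathbb{R}^d}\,h^{-d}\psi_h$ so that Lemma~\ref{key_condition} with $j=0$ absorbs the remaining $\mu$-integral. If anything, you are slightly more explicit than the paper about where the factor $\|\psi\|_{\infty,\mathbb{R}^d}$ enters, which the paper hides inside its appeal to Lemma~\ref{key_condition}.
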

\begin{proof}
   We  first use Sobolev embedding inequality and then  Jensen's inequality \cite{hardy1952inequalities}  to write
   \begin{equation*}
   \mathbb{E}\Vert Q_hf-\mathbb{E}[Q_hf] \Vert_{p} \le |\Omega|^{\frac{1}{p}-\frac{1}{2}}\mathbb{E}\Vert Q_hf-\mathbb{E}[Q_hf] \Vert_{2}\le |\Omega|^{\frac{1}{p}-\frac{1}{2}}(\mathbb{E}\Vert Q_hf-\mathbb{E}[Q_hf] \Vert_{2}^2)^{\frac{1}{2}}.
   \end{equation*}
 Based on  Fubini's theorem \cite{fubini1907sugli},  we have 
\begin{equation}
\label{eq:p_embedding}
   \mathbb{E}\Vert Q_hf-\mathbb{E}[Q_hf] \Vert_{p}\le |\Omega|^{\frac{1}{p}-\frac{1}{2}}\left(\int_\Omega\mathbb{E}| Q_hf(x)- \mathbb{E}[Q_hf(x)]|^2 dx\right)^{\frac{1}{2}}.
\end{equation}
This together with the fact that $\{  X_j\}_{j=1}^N$ are independent and identically distributed random variables yields
    \begin{equation*}
            \mathbb{E}[Q_hf(x)-\mathbb{E}[Q_hf(x)]]^2=\text{Var}\left( \frac{1}{N}\sum_{j=1}^Nf(X_j)\psi_h(x-X_j)\right)
            \le\frac{\|f\|_{\infty}^2}{Nh^{2d}}\text{Var}(\psi(h^{-1}(x-X))).
    \end{equation*}
Furthermore, observing that  $\text{Var}(\psi(h^{-1}(x-X)) \le E[\psi(h^{-1}(x-X))]^2$, we have 
\begin{equation*}  
 \text{Var}(Q_hf(x))\leq \frac{\|f\|_{\infty}^2}{Nh^{d}}\mathbb{E}[h^{-d}\psi^2(h^{-1}(x-X))] \leq CN^{-1}h^{-d},
\end{equation*}
where we have used  Lemma \ref{key_condition}.
Thus the lemma holds.
\end{proof}
\begin{lemma}\label{expo1}
Let  $Q_hf$ be defined as above with $\psi$ and $\mu$ satisfying conditions in Section $2$. Then   we have the following inequality
$$
\mathbb P \left( \left |\ \Vert Q_hf-\mathbb{E}[Q_hf]\Vert_{p}-\mathbb{E}\Vert Q_hf-\mathbb{E}[Q_hf]\Vert_{p}\ \right| \geq \epsilon\right) \leq 2\exp\left\{-CNh^{2d\left(1-\frac{1}{p}\right)}\epsilon^2\right\}
$$ holds true for any given  $\epsilon>0$.
\end{lemma}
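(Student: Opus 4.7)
The plan is to apply the bounded difference (McDiarmid) inequality \eqref{difference inequality} directly to the random variable
\[
\Psi(X_1,\ldots,X_N):=\|Q_hf-\mathbb{E}[Q_hf]\|_{p},
\]
viewed as a function of the i.i.d.\ sample $(X_1,\ldots,X_N)$. Note that $\mathbb{E}[Q_hf]$ is deterministic (it does not depend on the particular realization of the sample, only on the common law $\mu$), so when we replace a single coordinate $X_i$ by $X_i'$ the subtracted term is unchanged. This is the reason the McDiarmid framework is natural here.

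First, I would verify the bounded difference property. Writing $\widetilde{Q}_hf$ for the version of $Q_hf$ with $X_i$ replaced by $X_i'$ and using the $L^p$-triangle inequality,
\[
\bigl|\Psi(X_1,\ldots,X_i,\ldots,X_N)-\Psi(X_1,\ldots,X_i',\ldots,X_N)\bigr|
\;\le\;\bigl\|Q_hf-\widetilde{Q}_hf\bigr\|_p
\;=\;\frac{1}{N}\bigl\|f(X_i)\psi_h(\cdot-X_i)-f(X_i')\psi_h(\cdot-X_i')\bigr\|_p.
\]
A second triangle inequality followed by the change of variable $z=(x-X_i)/h$ gives, for any $y\in\Omega$,
\[
\|\psi_h(\cdot-y)\|_p=\Bigl(\int_{\Omega}h^{-dp}\psi\!\left(\tfrac{x-y}{h}\right)^{p}dx\Bigr)^{1/p}\le h^{-d(1-1/p)}\|\psi\|_{p,\mathbb{R}^d}.
\]
Combining these yields the uniform coordinate-wise bound
\[
a_i\;\le\;\frac{2\|f\|_\infty\|\psi\|_{p,\mathbb{R}^d}}{N}\,h^{-d(1-1/p)},\qquad i=1,\ldots,N.
\]

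Second, I would plug these $a_i$'s into \eqref{difference inequality}. We get $\sum_{i=1}^N a_i^2\le C N^{-1} h^{-2d(1-1/p)}$, so
\[
\mathbb{P}\bigl(|\Psi-\mathbb{E}\Psi|\ge\epsilon\bigr)\;\le\;2\exp\!\left\{-\frac{2\epsilon^2}{\sum_{i=1}^N a_i^2}\right\}\;\le\;2\exp\!\left\{-C\,N\,h^{2d(1-1/p)}\epsilon^2\right\},
\]
which is exactly the claimed inequality. No smoothness of $f$ beyond $\|f\|_\infty<\infty$ is needed, and the role of $\psi$ enters only through $\|\psi\|_{p,\mathbb{R}^d}$, which is finite because $\psi\in C(\mathbb{R}^d)$ is a nonnegative approximate identity with an integrable tail (used implicitly via the standing assumptions in Section~2).

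The only mild subtlety — and the step I would double-check carefully — is the $L^p$-bound on the translated scaled kernel, since the integral is over $\Omega$ rather than $\mathbb{R}^d$; this is handled by extending the domain of integration to $\mathbb{R}^d$ and using $\psi\ge0$, after which the change of variable produces the clean factor $h^{-d(1-1/p)}$. Everything else is routine triangle-inequality manipulation and a direct invocation of McDiarmid's inequality, so I expect no serious obstacle.
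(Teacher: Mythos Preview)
Your proposal is correct and follows essentially the same argument as the paper: both apply McDiarmid's bounded difference inequality to $\Psi(X_1,\ldots,X_N)=\|Q_hf-\mathbb{E}[Q_hf]\|_p$, use the triangle inequality to reduce the coordinate-wise oscillation to $\frac{1}{N}\|f(x_i)\psi_h(\cdot-x_i)-f(x_i')\psi_h(\cdot-x_i')\|_p$, and then extend the integral from $\Omega$ to $\mathbb{R}^d$ and change variables to obtain the bound $a_i\le 2\|f\|_\infty\|\psi\|_{p,\mathbb{R}^d}\,N^{-1}h^{d/p-d}$. Your identification of the $\Omega$ versus $\mathbb{R}^d$ issue and its resolution via nonnegativity of $\psi$ matches the paper exactly.
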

\begin{proof}
Let $g(X_1,...,X_N)=\Vert Q_hf-\mathbb{E}[Q_hf]\Vert_{p}$.   Based on Inequality \eqref{difference inequality}, we only need to  show that $g$ satisfies the bounded difference condition\footnotemark[\value{footnote}].  This follows from the triangle inequality that
\begin{equation*}
      \begin{split}
|g(x_1,...,x_i,...,x_N)-g(x_1,...,x_{i}^{'},...,x_N)|=
& \left | \| Q_hf(\cdot;x_1,...,x_i,...,x_N)-\mathbb{E}[Q_hf]\| _{p}- \| Q_hf(\cdot;x_1,...,x_i^{'},...,x_N)-\mathbb{E}[Q_hf]\| _{p}\right |\\
\leq & \| Q_hf(\cdot;x_1,...,x_i,...,x_N)- Q_hf(\cdot;x_1,...,x_i^{'},...,x_N)\| _{p}\\
= & \left \Vert \frac{1}{N}f(x_i)\psi_h(\cdot-x_i)- \frac{1}{N}f(x_i^{'})\psi_h(\cdot-x_i{'})\right \Vert_{p}\\
\leq &\frac{\|f\|_{\infty}}{Nh^d} \left(\left \Vert \psi(h^{-1}(\cdot-x_i))\right\Vert_{p,\mathbb{R}^d} +  \left\Vert\psi(h^{-1}(\cdot-x_i{'}) )\right\Vert_{p,\mathbb{R}^d}\right)\\
\leq &2\|f\|_{\infty}\| \psi\| _{p,\mathbb{R}^d} N^{-1}h^{d/p-d}.
      \end{split}
  \end{equation*}
 Since the right hand side of the above inequality does not depend on $x_1,\cdots,x_i,\cdots,x_N,x_i^{'} \in \Omega$, we use  Inequality \eqref{difference inequality} to get the desired result.
\end{proof}

Based on these  two lemmas,  we can get an improved error estimates of $Q_h^rf$ as given in the following theorem.
 \begin{theorem}\label{convergenceanalysi_1}
 Let $Q_h^rf$ be defined as above with $\psi$ and $\mu$ satisfying conditions in Section $2$. Then, for any given   $\epsilon>0$, we can choose 
 \begin{equation*}
     \begin{split}
 C\omega_f(h) < \frac{\epsilon}{2}\ 
 \mbox{and} \ 
    CN^{-\frac{1}{2}}h^{-\frac{d}{2}}<\frac{\epsilon}{8},
     \end{split}
 \end{equation*}
 such that the inequality
\begin{equation*}
    \begin{split}
          \mathbb{P}\{\|Q_h^rf-f\|_{p}>\epsilon\}
    \leq 4\exp\left\{-C Nh ^{2d\left(1-\frac{1}{p}\right)}\epsilon^2\right\}
    \end{split}
\end{equation*}
 holds true for any $1\leq p\leq 2$.
\end{theorem}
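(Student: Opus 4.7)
The plan is to assemble three ingredients already developed in the excerpt---the decomposition from Lemma \ref{le:decomposition}, the $L^p$-expected-fluctuation bound from Lemma \ref{irr expectation super norm estimate} (valid only for $1\le p\le 2$, which is why the theorem is restricted to this range), and the bounded-difference concentration from Lemma \ref{expo1}---glued together by the triangle inequality \eqref{key inequality}. The whole argument would reduce to exponentially bounding two deviation probabilities, one for the numerator-type sum $Q_hf$ and one for the denominator-type sum $r_h$, each by the same two-step mechanism: bound the expected $L^p$-fluctuation via Lemma \ref{irr expectation super norm estimate}, then concentrate around that expectation via Lemma \ref{expo1}.

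First, under the hypothesis $C\omega_f(h)<\epsilon/2$, Lemma \ref{le:decomposition} immediately gives
$$
\mathbb{P}\{\|Q_h^rf-f\|_p>\epsilon\}\le \mathbb{P}\{\|Q_hf-\mathbb{E}[Q_hf]\|_p\ge \tfrac{\epsilon}{4C}\}+\mathbb{P}\{\|r_h-\mathbb{E}[r_h]\|_p\ge \tfrac{\epsilon}{4C}\}.
$$
For the first summand, I would apply the triangle-type inequality \eqref{key inequality}, which bounds it by the probability that the deviation of the $L^p$-norm from its own expectation, added to that expectation itself, exceeds $\epsilon/(4C)$. Lemma \ref{irr expectation super norm estimate} gives $\mathbb{E}\|Q_hf-\mathbb{E}[Q_hf]\|_p\lesssim N^{-1/2}h^{-d/2}$, and the second hypothesis $CN^{-1/2}h^{-d/2}<\epsilon/8$---after absorbing the constant from Lemma \ref{irr expectation super norm estimate} into the floating $C$---forces this expectation to be at most $\epsilon/(8C)$. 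Shrinking the event in question to $\{|\|Q_hf-\mathbb{E}[Q_hf]\|_p-\mathbb{E}\|Q_hf-\mathbb{E}[Q_hf]\|_p|\ge \epsilon/(8C)\}$ and applying Lemma \ref{expo1} then yields a bound of $2\exp\{-CNh^{2d(1-1/p)}\epsilon^2\}$.

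The $r_h$-summand would be handled identically, since $r_h$ is literally $Q_hf$ specialized to the constant target $f\equiv 1$; the estimates in Lemmas \ref{irr expectation super norm estimate} and \ref{expo1} depend on the target only through $\|f\|_\infty$, so both transfer verbatim and produce the same exponential bound. Summing the two contributions produces the leading factor $4$ in the final inequality.

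The principal---essentially the only---obstacle is bookkeeping the universal constants so that the two hypotheses of the theorem align exactly with the thresholds generated along the chain of inequalities. Since the excerpt already adopts the convention (stated just after Theorem \ref{th:convolution_error}) that $C$ denotes a universal constant whose value may change from line to line, this is tedious but transparent, and no new analytic or probabilistic machinery is required beyond what has already been established.
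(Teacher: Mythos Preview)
Your proposal is correct and follows essentially the same route as the paper: apply Lemma~\ref{le:decomposition} under the hypothesis $C\omega_f(h)<\epsilon/2$, then for each of $Q_hf$ and $r_h$ use inequality~\eqref{key inequality} together with Lemma~\ref{irr expectation super norm estimate} (invoking $CN^{-1/2}h^{-d/2}<\epsilon/8$) to reduce to a centered deviation, and finish with Lemma~\ref{expo1}. The paper's version is more terse but the logic and constant-bookkeeping are identical.
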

\begin{proof}
Using Lemma \ref{irr expectation super norm estimate} and  the assumption $ CN^{-\frac{1}{2}}h^{-\frac{d}{2}}<\frac{\epsilon}{8}$,  we obtain 
 \begin{equation*}
    \begin{split}
      \mathbb{P}\left\{\|  Q_hf-\mathbb{E}[Q_hf]\| _{p}\geq \frac{\epsilon}{4}\right\} 
      &\leq \mathbb{P}\left\{\left|\| \mathbb{E}[Q_hf]-Q_hf\| _{p}-\mathbb{E}\|  \mathbb{E}[Q_hf]-Q_hf\| _{p}\right|\geq \frac{\epsilon}{8}\right\}
\end{split}
\end{equation*}
from Inequality \eqref{key inequality}. Based on 
 Lemma \ref{expo1}, we have 
 \begin{equation*}
    \begin{split} 
      \mathbb{P}\left\{\|  Q_hf-\mathbb{E}[Q_hf]\| _{p}\geq \frac{\epsilon}{4}\right\} &\leq  2\exp\left\{-C Nh ^{2d\left(1-\frac{1}{p}\right)}\epsilon^2\right\}.
    \end{split}
\end{equation*}
This together with 
 Lemma \ref{le:decomposition} yields the desire result.
\end{proof}

We end up with establishing the mean $L^p$-error estimate of $Q_h^rf$ , that is, $\mathbb{E}\Vert Q_h^rf-f\Vert_{p}$. 
 \begin{theorem}
\label{mean Lp eatimate}
   Let  $Q_h^rf$ be defined as above with $\psi$ and $\mu$ satisfying conditions in Section $2$. Then we have 
    \begin{equation*}
    \mathbb{E}\left\Vert Q_h^rf-f\right\Vert_{p}
    \lesssim \left\{\begin{array}{ll}
   \max\left\{\omega_f(h), \ N^{-\frac{1}{2}}h^{-\frac{d}{2}}\right\}, & \hbox{if} \quad 1 \le p \le 2,\\
\max\left\{\omega_f(h),  \ N^{-\frac{1}{2}}h^{-\frac{d}{2}}|\mathrm{log} \ h|^{\frac{1}{2}}\right\}, & \hbox{if} \quad 2<p \le \infty.
       \end{array} \right. 
\end{equation*} 
In particular,  if   $\omega_f(h)=\mathcal{O}{(h^s)} \ (0<s\le 1)$, then by setting $h=\mathcal{O}{(N^{-\frac{1}{2s+d}})}$, we can even get an optimal error estimate
\begin{equation*}
\label{prior_MSE}
     \mathbb{E}\left\Vert Q_h^rf-f\right\Vert_{p}
    \lesssim \left\{\begin{array}{ll}
   N^{-\frac{s}{2s+d}} , & \hbox{if} \quad 1 \le p \le 2,\\
  N^{-\frac{s}{2s+d}}(\log N)^{\frac{1}{2}}  & \hbox{if} \quad 2<p \le \infty.
       \end{array} \right. 
\end{equation*}
\end{theorem}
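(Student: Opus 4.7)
The plan is to derive both mean-error bounds by integrating the two concentration inequalities via the standard layer-cake (tail-integral) identity $\mathbb{E}Y=\int_0^{\infty}\mathbb{P}(Y>\epsilon)\,d\epsilon$ applied to $Y=\|Q_h^rf-f\|_p$. A preliminary but crucial observation is that, since $\psi\ge 0$, the quasi-interpolant $Q_h^rf$ is a convex combination of the values $f(X_j)$, so $\|Q_h^rf\|_{\infty}\le \|f\|_{\infty}$; combined with $\|\cdot\|_p\le |\Omega|^{1/p}\|\cdot\|_{\infty}$, this gives an \emph{almost sure} bound $\|Q_h^rf-f\|_p\le M:=2|\Omega|^{1/p}\|f\|_{\infty}$. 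Hence the tail integral actually runs only over $[0,M]$, which will be essential for handling the exponent $\epsilon^2/(1+\epsilon)$ in the $L^\infty$-case.

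For $1\le p\le 2$, I would set $t_0:=C_0\max\{\omega_f(h),\,N^{-1/2}h^{-d/2}\}$ with $C_0$ large enough that the two admissibility conditions of Theorem \ref{convergenceanalysi_1} hold for every $\epsilon\ge t_0$. Splitting the tail integral, the piece on $[0,t_0]$ trivially contributes at most $t_0$, while the piece on $[t_0,M]$ is dominated by
\[
\int_{t_0}^{\infty} 4\exp\!\bigl\{-CNh^{2d(1-1/p)}\epsilon^2\bigr\}\,d\epsilon \;\lesssim\; \bigl(Nh^{2d(1-1/p)}\bigr)^{-1/2}=N^{-1/2}h^{-d(1-1/p)}.
\]
Since $1-1/p\le 1/2$ on this range and $h<1$, we have $h^{-d(1-1/p)}\le h^{-d/2}$, so the tail is already bounded by $N^{-1/2}h^{-d/2}\le t_0$, producing $\mathbb{E}\|Q_h^rf-f\|_p\lesssim t_0$.

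For $2<p\le\infty$, I would mirror the argument with $t_1:=C_0\max\{\omega_f(h),\,N^{-1/2}h^{-d/2}|\log h|^{1/2}\}$ matched to Theorem \ref{convergenceanalysi_infty}. On $[t_1,M]$ the factor $\epsilon$ is uniformly bounded, so $\epsilon^2/(1+\epsilon)\ge \epsilon^2/(1+M)$ and the tail is controlled by a Gaussian integral of order $(Nh^d)^{-1/2}$, which is in turn majorized by $N^{-1/2}h^{-d/2}|\log h|^{1/2}\le t_1$. This yields the claimed estimate in the $p>2$ regime. The H\"older corollary then follows simply: plugging $\omega_f(h)=\mathcal{O}(h^s)$ and choosing $h=\mathcal{O}(N^{-1/(2s+d)})$ balances $h^s$ against $N^{-1/2}h^{-d/2}$, giving $\mathcal{O}(N^{-s/(2s+d)})$, and $|\log h|^{1/2}$ becomes $\mathcal{O}((\log N)^{1/2})$ in the second case.

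The main obstacle is bookkeeping rather than mathematical depth: one must carefully propagate the admissibility constraints $C\omega_f(h)<\epsilon/2$ and the corresponding lower bounds on $\epsilon$ from Theorems \ref{convergenceanalysi_infty}--\ref{convergenceanalysi_1} into the lower limit of the tail integral, and verify that the Gaussian tail never outgrows the split point. In the $L^\infty$-direction in particular, the passage from the $\epsilon^2/(1+\epsilon)$-type exponent to a purely sub-Gaussian $\epsilon^2$-exponent is only legitimate because of the a.s.\ upper bound $M$; without it, the integral would have to be split further into $\epsilon\le 1$ and $\epsilon>1$, which would complicate but not essentially change the argument.
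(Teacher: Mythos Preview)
Your proposal is correct and follows the same overall route as the paper: apply the layer-cake identity $\mathbb{E}Y=\int_0^\infty\mathbb{P}(Y>r)\,dr$ to $Y=\|Q_h^rf-f\|_p$, split the integral at the threshold dictated by the admissibility conditions of Theorems~\ref{convergenceanalysi_infty}--\ref{convergenceanalysi_1}, bound the lower piece trivially by the split point, and bound the upper piece via the concentration inequality. The one genuine technical difference lies in the $p>2$ tail. You first observe (correctly) that $Q_h^rf$ is a convex combination of the $f(X_j)$, giving the almost-sure bound $\|Q_h^rf-f\|_p\le M$, and then use $\epsilon^2/(1+\epsilon)\ge\epsilon^2/(1+M)$ on $[t_1,M]$ to reduce to a Gaussian tail integrating to $\lesssim(Nh^d)^{-1/2}$. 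The paper does not invoke any a.s.\ bound; instead it exploits the lower integration limit $r\ge\eta:=CN^{-1/2}h^{-d/2}|\log h|^{1/2}$ to write $\tfrac{r^2}{1+r}=\tfrac{r}{1/r+1}\gtrsim r\eta$ (using $1/r+1\lesssim 1/\eta$ for small $\eta$), obtaining a purely exponential tail whose integral is $\lesssim N^{-1/2}h^{-d/2}|\log h|^{-1/2}$. Your version is arguably cleaner and sidesteps a somewhat delicate manipulation of the exponent, at the modest cost of the extra a.s.\ observation; both yield the stated bound, and the H\"older corollary follows identically.
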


\begin{proof}
We first consider the case $1\le p\le 2.$   Without loss of generality, we assume $\omega_f(h)\leq CN^{-\frac{1}{2}}h^{-\frac{d}{2}}\leq \epsilon$. Then, based on the identity \cite{wu2020polynomial}
$\mathbb{E}[V]=\int_0^\infty \mathbb{P}\{ V>r\}dr$ and Theorem \ref{convergenceanalysi_1}, we have 
    \begin{equation*}
    \begin{split}
    \mathbb{E}\left\Vert Q_h^rf-f\right\Vert_{p}
    &\leq  \int_{0}^{C\ N^{-\frac{1}{2}}h^{-\frac{d}{2}}}\mathbb{P}\left\{ \left\Vert Q_h^rf-f\right\Vert_{p}>r\right\}dr+\int_{0}^{\infty}\mathbb{P}\left\{ \left\Vert Q_h^rf-f\right\Vert_{p}>r\right\}dr\\
    &\lesssim N^{-\frac{1}{2}}h^{-\frac{d}{2}}+\int_{0}^{\infty} \exp\left\{-CNh^{2d\left(1-\frac{1}{p}\right)}r^2\right\}dr\\
    &\lesssim  N^{-\frac{1}{2}}h^{-\frac{d}{2}}+N^{-\frac{1}{2}}h^{-d\left(1-\tfrac{1}{p}\right)}\\
    &\lesssim N^{-\frac{1}{2}}h^{-\frac{d}{2}}.
  \end{split}  
\end{equation*}
Next, we consider the case $2<p\le\infty$.  
Similarly, by  assuming $\omega_f(h) \leq C N^{-\frac{1}{2}}h^{-\frac{d}{2}}|\mathrm{log} \ h|^{\frac{1}{2}}:=\eta(N,h)\leq \epsilon$, we have 
 \begin{equation*}
    \begin{split}
    \mathbb{E}\left\Vert Q_h^rf-f\right\Vert_{p}
    &\leq  \int_{0}^{\eta(N,h)}\mathbb{P}\left\{ \left\Vert Q_h^rf-f\right\Vert_{p}>r\right\}dr+\int_{\eta(N,h)}^{\infty}\mathbb{P}\left\{ \left\Vert Q_h^rf-f\right\Vert_{p}>r\right\}dr\\
    &\leq \eta(N,h)+\int_{\eta(N,h)}^{\infty}   \exp\left\{-\frac{CNh^dr}{2(1/r+1)}\right\} dr\\
    &\le N^{-\frac{1}{2}}h^{-\frac{d}{2}}|\mathrm{log} \ h|^{\frac{1}{2}} + \int_{0}^{\infty} \exp\left\{-CN^{\frac{1}{2}}h^{\frac{d}{2}}|\mathrm{log} \ h|^{\frac{1}{2}}r\right\}dr \\
 &\lesssim  N^{-\frac{1}{2}}h^{-\frac{d}{2}}|\mathrm{log} \ h|^{\frac{1}{2}}+N^{-\frac{1}{2}}h^{-\frac{d}{2}}|\mathrm{log} \ h|^{-\frac{1}{2}}\\
 &\lesssim N^{-\frac{1}{2}}h^{-\frac{d}{2}}|\mathrm{log} \ h|^{\frac{1}{2}}.
  \end{split}  
\end{equation*}
Note that we have used  the inequality $\frac{1}{r}+1\le \eta(N,h)$ from the second line to the third line of the above inequality.
This completes the proof.
\end{proof}

We have constructed a stochastic quasi-interpolation scheme $Q_h^rf$ and derived its optimality and regularization properties as well as some statistical properties under the framework of probabilistic numerics. In the next section, we shall provide some numerical simulations to validate above theoretical analysis.

\section{Numerical simulations}
We consider the following three target functions:
\begin{equation*}
    f_{test}=\left\{
        \begin{array}{ll}
         |x| &  x \in [-1,1],\\
         sin(2\pi x_1)cos(2\pi x_2)sin(2\pi x_3) & x=(x_1,x_2,x_3) \in[0,1]^3,\\
         \prod_{j=1}^{11}\left(sin\left[\frac{\pi}{2}\left(x_j+\frac{j}{11}\right)\right]\right)^{\frac{5}{j}} \ & x=(x_1,\cdots,x_{11}) \in [0,1]^{11}.
        \end{array}
        \right.
\end{equation*}
As examples, we choose 
Gaussian kernel
\begin{equation*}
    \psi_{\sigma}(x)=\frac{1}{(2\pi\sigma^2)^{d/2}}e^{-\frac{\Vert x \Vert^2}{2\sigma^2}},
\end{equation*} and compactly supported kernel \cite{wu1995compactly} 
$$
\psi_{\beta}(x)=\left(\text{max}(1 - \Vert x\Vert, 0)\right)^{\beta}.
$$
Here  $ \sigma$, $\beta$ are two positive constants, while $\| \cdot\| $ represents the Euclidean norm.
We  set $\mu$ to be a multivariate truncated normal distribution  defined on  $\Omega$.  One can choose other distributions, such as Beta distribution, uniform distribution, or more generally, any distribution with a bounded density function.  We use quasi-interpolation $Q_h^rf$ with  these two kernels ( $\psi_{\sigma}$,  $\psi_{\beta}$, under  $\sigma=1,\ \beta =3$) to approximate each target function  in dimensions $d = 1, 3, 11$, respectively.

We adopt the following technique to derive a posteriori convergence orders.  We first choose $100$ random test points $\{t_l\}_{l=1}^{100}$ over $\Omega$ and simulate $1000$ times for each $N$ sampling data.    Then we respectively compute 
empirical mean $L^1$-norm approximation error:
\begin{equation*}
\mathbb{EMAE}_{L^1}(Q_h^rf)= \frac{1}{1000}\sum_{k=1}^{1000}\frac{1}{100}\sum_{l=1}^{100}|Q_{h,k}^{r}f(t_l)-f(t_l)|,
\end{equation*} 
and empirical mean $L^\infty$-norm approximation error:
\begin{equation*}
\mathbb{EMAE}_{L^\infty}(Q_h^rf)= \frac{1}{1000}\sum_{k=1}^{1000}\text{max}_{1\le l\le 100}|Q_{h,k}^{r}f(t_l)-f(t_l)|.
\end{equation*} 
  We go further with assuming 
$$
\mathbb{EMAE}_{L^1}(Q_h^rf)=K_1N^{-\delta_1}\ \text{and} \ \ \mathbb{EMAE}_{L^\infty}(Q_h^rf)= K_2N^{-\delta_2},
$$
for some  positive constants $K_1$, $K_2$, $\delta_1$ and $\delta_2$.   This together with  the log transform technique leads to the following two log-linear models (with respect to $N$):
$$
\log(\mathbb{EMAE}_{L^1}(Q_h^rf)) = \log K_1-\delta_1 \log N\ \ \text{and} \ \ 
\log(\mathbb{EMAE}_{L^\infty}(Q_h^rf)) =\log K_2-\delta_2 \log N.
$$
To get  estimated  values (posteriori convergence orders) $\bar{\delta_1}$ and $\bar{\delta_2}$ of coefficients $\delta_1$ and $\delta_2$,   we employ the  least square technique based on approximation errors evaluated at $N=2^j$, $j=6,7,8,...,11$.  
Numerical results are provided in Table \ref{table:numerical_results_posteriori_convergence_rates}.  We choose  $h=C_{\sigma}(1/N)^{1/(2+d)}$ and $h=C_{\beta}(1/N)^{1/(2+d)}$  for $\mathbb{EMAE}_{L^1}$ and $\mathbb{EMAE}_{L^\infty}$.  
Corresponding constants $\{C_{\sigma},C_{\beta}\}$ are provided in Table
\ref{table:Parameter_Choices_posteriori_convergence_rates} in the appendix by trials and errors. 
From Table \ref{table:numerical_results_posteriori_convergence_rates}, we can find that a posteriori convergence orders (boldface) are all higher than  a priori counterparts.
 \begin{table}[ht]
\caption{A priori and  posteriori convergence orders of our quasi-interpolation  with different kernels under different dimensions.}
\centering
\begin{tabular}{ccccccc} 
\toprule
 & \multicolumn{3}{c}{convergence orders of $\mathbb{EMAE}_{L^1}$} & \multicolumn{3}{c}{convergence orders of $\mathbb{EMAE}_{L^\infty}$}  \\
\cmidrule(lr){2-4} \cmidrule(lr){5-7}  
kernel/\quad d & 1 & 3 & 11 & 1 & 3 & 11 \\
\specialrule{1.5pt}{0pt}{0pt}
$\psi_{\sigma}$ & $(0.33, \textbf{0.65})$ & $(0.20, \textbf{0.32})$  & $(0.07, \textbf{0.16})$     & $(0.33, \textbf{0.69})$ & $(0.20, \textbf{0.37})$  & $(0.07, \textbf{0.16})$   \\
$\psi_{\beta}$  & $(0.33, \textbf{0.63})$ & $(0.20, \textbf{0.24})$  & $(0.07, \textbf{0.11})$  & $(0.33, \textbf{0.55})$ & $(0.20, \textbf{0.31})$  & $(0.07, \textbf{0.22})$ \\
\bottomrule
\end{tabular}
\label{table:numerical_results_posteriori_convergence_rates}
\end{table}

\begin{figure}[ht]
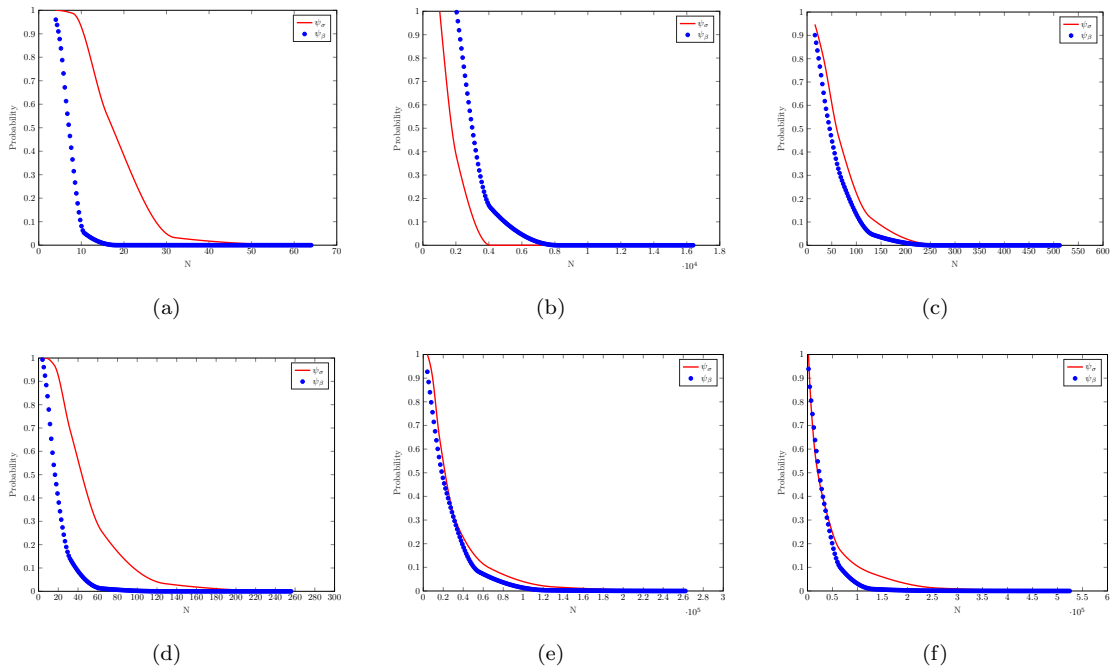

\centering
\subfigure[ ]{
\begin{minipage}[t]{0.3\linewidth}
\centering
\resizebox{0.9\linewidth}{!}{\input{1_dim_l_1}}
\end{minipage}%
}%
\subfigure[ ]{
\begin{minipage}[t]{0.3\linewidth}
\centering
\resizebox{0.9\linewidth}{!}{\input{3_dim_l_1}}
\end{minipage}%
}%
\subfigure[ ]{
\begin{minipage}[t]{0.3\linewidth}
\centering
\resizebox{0.9\linewidth}{!}{\input{11_dim_l_1}}
\end{minipage}%
}\\
\subfigure[ ]{
\begin{minipage}[t]{0.3\linewidth}
\centering
\resizebox{0.9\linewidth}{!}{\input{1_dim_l_infty}}
\end{minipage}%
}%
\subfigure[  ]{
\begin{minipage}[t]{0.3\linewidth}
\centering
\resizebox{0.9\linewidth}{!}{\input{3_dim_l_infty}}
\end{minipage}%
}%
\subfigure[  ]{
\begin{minipage}[t]{0.3\linewidth}
\centering
\resizebox{0.9\linewidth}{!}{\input{11_dim_l_infty}}
\end{minipage}%
}%
\caption{Visualization of exponential decay rates in the sense of empirical $L^1$-norm and  $L^{\infty}$-norm probabilistic convergence with respect to $N$ under different dimensions. } 
\label{Figure:Probability_simulations}
\end{figure}

We end this section with demonstrating exponential decay rates  in the sense of empirical  $L^{1}$-norm probabilistic convergence and empirical $L^\infty$-norm probabilistic convergence under threshold parameters $\epsilon=0.05$ and $\epsilon=0.1$.  
We select 100 random test points $\{t_l\}_{l=1}^{100}$ over $\Omega$. For each   $ N = 2^j, j=2,3,4,\cdots,19 $,  we respectively compute empirical probabilities of approximation errors
under 1000  simulations.  We set $ h = C_{\sigma}(1/N)^{1/(2+d)} $ and $ h = C_{\beta}(1/N)^{1/(2+d)} $   for empirical $ L^1 $-norm approximation errors and empirical $ L^\infty$-norm approximation errors. Here, constants $\{C_{\sigma},C_{\beta}\}$ are provided in Table
\ref{table:Parameter_Choices_Probability simulations} in the appendix by trials and errors.
Numerical results are shown in Figure \ref{Figure:Probability_simulations}.  Subfigures (a)-(c) demonstrate empirical probabilities in terms of empirical $L^1$-norm approximation errors  under  $d=1,3, 11$, respectively,  while   Subfigures (d)-(f)  demonstrate corresponding   $L^{\infty}$-norm  counterparts.  From these subfigures, we can see that empirical probabilities of corresponding approximation errors decays exponentially  with respect to $N$.

\bibliographystyle{amsplain} 
\bibliography{References_copy} 

\section*{Appendix}
\begin{table}[htbp]
\caption{ Choices of $\{C_\sigma,C_\beta\}$}
\centering
\begin{tabular}{ccccc} 
\toprule
 & \multicolumn{2}{c}{The $\mathbb{EMAE}_{L^1}$-case} & \multicolumn{2}{c}{The $\mathbb{EMAE}_{L^\infty}$-case} \\
\cmidrule(lr){2-3} \cmidrule(lr){4-5} 
d /  & $C_{\sigma}$ &  $C_{\beta}$  & $C_{\sigma}$ &  $C_{\beta}$  \\
\specialrule{1.5pt}{0pt}{0pt}
1 & 0.30 & 1.00   & 0.10 & 1.00 \\
3  & 0.30 & 1.50   & 0.10 & 1.50  \\
11  & 0.30 & 2.00    & 0.10 & 2.00 \\
\bottomrule
\end{tabular}
\label{table:Parameter_Choices_posteriori_convergence_rates}
\end{table}

\begin{table}[htbp]
\caption{ Choices of $\{C_\sigma,C_\beta\}$.}
\centering
\begin{tabular}{ccccc} 
\toprule
  & \multicolumn{2}{c}{The empirical $L^{1}$-case} & \multicolumn{2}{c}{The empirical $L^{\infty}$-case} \\
\cmidrule(lr){2-3} \cmidrule(lr){4-5} 
d/ & $C_{\sigma}$ &  $C_{\beta}$  & $C_{\sigma}$ &  $C_{\beta}$  \\
\specialrule{1.5pt}{0pt}{0pt}
1 & 0.20 & 1.50    & 0.20 & 1.00 \\
3  & 0.20 & 1.00& 0.10 & 2.00  \\
11  & 0.30 & 2.00 & 0.30 & 2.00 \\
\bottomrule
\end{tabular}
\label{table:Parameter_Choices_Probability simulations}
\end{table}

\end{document}